\def\R{\mathbb{R}}
\def\T{\mathbb{T}}
\def\Z{\mathbb{Z}}
\def\C{\mathbb{C}}
\def\Q{\mathbb{Q}}
\def\E{\mathcal{E}}
\renewcommand{\d}{\text{\rm d}}
\newcommand{\bxi}{{\bm \xi}}
\newcommand{\balpha}{{\bm \alpha}}
\newcommand{\mc}{\mathcal}
\newtheorem{theorem}{Theorem}
\newtheorem{corollary}[theorem]{Corollary}
\newtheorem*{definition*}{Definition}
\newtheorem{lemma}[theorem]{Lemma}
\DeclareFontFamily{U}{tipa}{}
\DeclareFontShape{U}{tipa}{m}{n}{<->tipa10}{}
\newcommand{\arc@char}{{\usefont{U}{tipa}{m}{n}\symbol{62}}}%
\numberwithin{equation}{section}
\newcommand{\intav}[1]{\mathchoice {\mathop{\vrule width 6pt height 3 pt depth  -2.5pt
\kern -8pt \intop}\nolimits_{\kern -6pt#1}} {\mathop{\vrule width
5pt height 3  pt depth -2.6pt \kern -6pt \intop}\nolimits_{#1}}
{\mathop{\vrule width 5pt height 3 pt depth -2.6pt \kern -6pt
\intop}\nolimits_{#1}} {\mathop{\vrule width 5pt height 3 pt depth
-2.6pt \kern -6pt \intop}\nolimits_{#1}}}
\newcommand{\intavl}[1]{\mathchoice {\mathop{\vrule width 6pt height 3 pt depth  -2.5pt
\kern -8pt \intop}\limits_{\kern -6pt#1}} {\mathop{\vrule width 5pt
height 3  pt depth -2.6pt \kern -6pt \intop}\nolimits_{#1}}
{\mathop{\vrule width 5pt height 3 pt depth -2.6pt \kern -6pt
\intop}\nolimits_{#1}} {\mathop{\vrule width 5pt height 3 pt depth
-2.6pt \kern -6pt \intop}\nolimits_{#1}}}
  \newcommand{\z}{{\bm z}}
     \newcommand{\n}{{\bm n}}
      \newcommand{\s}{{\bm s}}      
      \newcommand{\0}{{\bm 0}}
        \renewcommand{\t}{{\bm t}}
    \renewcommand{\bxi}{{\bm \xi}}   
       \newcommand{\btheta}{{\bm \theta}}
\title[Effective equidistribution of Galois orbits]{Effective equidistribution of Galois orbits\\ for mildly regular test functions}
\author[Carneiro]{Emanuel Carneiro}
\address{ICTP - The Abdus Salam International Centre for Theoretical Physics, 
Strada Costiera, 11, I - 34151, Trieste, Italy.}
\email{carneiro@ictp.it}
\author[Das]{Mithun Kumar Das}
\address{ School of Mathematical Sciences, National Institute of Science Education and Research, A CI of 
		Homi Bhabha National Institute, Jatni, Khurda, 
		752050, India.}
\email{das.mithun3@gmail.com}
\date{\today}                                           
\begin{document}

\subjclass[2020]{11G50, 11K38, 43A25}
\keywords{Heights; equidistribution of Galois orbits; Fourier analysis; effective estimates; regularity}
\begin{abstract} In this paper we provide a detailed study on effective versions of the celebrated Bilu's equidistribution theorem for Galois orbits of sequences of points of small height in the $N$-dimensional algebraic torus, identifying the quantitative dependence of the convergence in terms of the regularity of the test functions considered. We develop a general Fourier analysis framework that extends previous results obtained by Petsche (2005), and by D'Andrea, Narv\'aez-Clauss and Sombra (2017).
\end{abstract}

\maketitle 

\section{Introduction}
This is a paper in analysis motivated by a problem in algebraic number theory. Let $\overline{\Q}$ denote the algebraic closure of $\Q$. By $\overline{\mathbb{Q}}^{\times}$ and $\C^{\times}$ we denote the multiplicative groups of $\overline{\Q}$ and $\C$, respectively. For $N \geq 1$, Bilu's equidistribution theorem \cite{Bilu} establishes that the Galois orbits of strict sequences of points of small Weil height in $\big(\overline{\mathbb{Q}}^{\times}\big)^N$ tend to the uniform distribution around the unit polycircle $(\mathbb{S}^1)^N$. Our aim here is to provide a detailed study on effective versions of this theorem via Fourier analysis, identifying the quantitative dependence on the regularity of the test functions considered.

\smallskip

There are important previous works in the literature that touch on the theme of effective estimates. In dimension $N=1$ we can cite, for instance, the works of Petsche \cite{P}, Favre and Rivera-Letelier \cite{FR}, Pritsker \cite{Pr}, Baker and Masser \cite{BM}, and Amoroso and Plessis \cite{AP}. In the multidimensional case $N\geq 1$, the elegant work of D'Andrea, Narv\'aez-Clauss and Sombra \cite{DNS} is our main reference on the theme. The effective estimates in these works have a common underlying characteristic: they assume a reasonably good level of regularity on the test functions, usually a suitable notion of Lipschitz continuity. The test functions on Bilu's equidistribution theorem are, in principle, only bounded and continuous. From the analysis point of view, there is a vast world between continuous and Lipschitz continuous functions, and the main motivation for  this paper is to close this gap in a systematic way. One can consider, for instance, the concepts of H\"{o}lder continuity and fractional derivatives, and our main results (Theorems \ref{Thm1} and \ref{Thm5}, and Corollaries \ref{Cor3} and \ref{Cor6}) provide the sharp quantitative dependence of the convergence in terms of the height in these cases. As far as applications go, in Appendix \ref{App_A} we show how our methods yield a new bound for the multidimensional angular discrepancy and, in Appendix \ref{App_C}, we briefly complete the proof of Bilu's equidistribution theorem using our effective estimates for mildly regular test functions.

\smallskip 

Our study treats the multidimensional situation, with Fourier analysis playing an important role. We extend the setup of D'Andrea, Narv\'aez-Clauss and Sombra \cite{DNS}, and of Petsche \cite{P}, to a broader Fourier analytic framework, providing refinements over the cases considered in these papers. The one-dimensional effective estimates of the papers \cite{AP, BM, FR, Pr} consider a slightly different setup, bringing certain tools from potential theory to control the contribution of the angular part instead of Fourier analysis; hence our work, when restricted to dimension $N=1$, complements these but does not directly compare to them (in the sense that one can find examples of test functions that are included in our framework but are not included in the frameworks of these papers, and vice-versa).

\subsection{Background} We start by reviewing the basic terminology in order to state our results. We generally follow the notation of \cite{DNS, P} to facilitate the references. 

\smallskip

For a finite set $S \subset (\C^{\times})^N$, define the discrete probability measure on $(\C^{\times})^N$ associated to it by 
$$\mu_{S} := \frac{1}{|S|}\sum_{\z \in S} \delta_{ \z}\,,$$
where $|S|$ denotes the cardinality of the set $S$ and $\delta_{\z}$ denotes the Dirac delta measure on $(\C^{\times})^N$ supported in $\z$. Let $(\mathbb{S}^1)^N = \{\z = (z_1, z_2, \ldots, z_N) \in \C^N;\, |z_1| = |z_2| = \ldots = |z_N| = 1\}$ be the unit polycircle in $\C^N$. Note that $(\mathbb{S}^1)^N$ is a compact subgroup of $\C^N$ that is isomorphic to $(\R/\Z)^N$. We let $\mu_{(\mathbb{S}^1)^N}$ be the Haar probability measure on $(\mathbb{S}^1)^N$, considered as a measure on $(\C^{\times})^N$.

\smallskip

For $P(x) = \sum_{j=0}^d c_jx^j = c_d \prod_{j=1}^d (x - \alpha_j)$ a non-zero polynomial in $\C[x]$, its logarithmic Mahler measure is 
$$m(P) := \int_{\C^{\times}} \log \left|P(z)\right| 
\d \mu_{\mathbb{S}^1}(z) = \int_{\R/\Z} \log\big| P\big(e^{2\pi i \theta}\big)\big|\,\d\theta =  \log |c_d| + \sum_{j=1}^d \log^+ |\alpha_j|\,,$$
where $\log^+x:= \max\{0, \log x\}$, for $x >0$. If $\xi \in \overline{\Q}$, its Weil height is defined as
$$h(\xi) := \frac{m(P_{\xi})}{\deg(\xi)}\,,$$
where $P_{\xi} \in \Z[x]$ is the minimal polynomial of $\xi$ over $\Q$, and $\deg(\xi) := [\Q(\xi):\Q] = \deg(P_{\xi})$ is the degree of $\xi$. We extend this notion of height to $\big(\overline{\mathbb{Q}}^{\times}\big)^N$ by setting, for $\bxi = (\xi_1, \xi_2, \ldots, \xi_N) \in \big(\overline{\mathbb{Q}}^{\times}\big)^N$, 
$$h(\bxi) := h(\xi_1) + h(\xi_2) + \ldots + h(\xi_N).$$
If $\bxi = (\xi_1, \xi_2, \ldots, \xi_N) \in \big(\overline{\mathbb{Q}}^{\times}\big)^N$, its Galois orbit $S$ is the finite set 
$$S = \big\{(\sigma \xi_1, \sigma \xi_2, \ldots, \sigma \xi_N) \,: \, \sigma \in {\rm Gal}\big(\overline{\Q}/\Q\big)\big\}.$$

\smallskip

For $\bxi \in \big(\overline{\mathbb{Q}}^{\times}\big)^N$, and $F:(\C^{\times})^N \to \C$ a test function, we henceforth define the quantity
\begin{equation*}
\E(F, \bxi):= \left| \int_{(\C^{\times})^N} F \,\d \mu_{S} - \int_{(\C^{\times})^N} F \,\d \mu_{(\mathbb{S}^1)^N} \right| \,,
\end{equation*}
where $S$ is the Galois orbit of $\bxi$. 

\smallskip

We say that a sequence $\{\bxi_k\}_{k \geq 1} \subset \big(\overline{\mathbb{Q}}^{\times}\big)^N$ is strict if any proper algebraic subgroup of $\big(\overline{\mathbb{Q}}^{\times}\big)^N$ contains $\bxi_k$ for only finitely many values of $k$. In 1997, Bilu \cite{Bilu} proved the following result.

\begin{theorem}[Bilu] \label{BiluThm}Let $\{\bxi_k\}_{k \geq 1} \subset \big(\overline{\mathbb{Q}}^{\times}\big)^N$ be a strict sequence with $\lim_{k \to \infty} h(\bxi_k) = 0$. Then, for any bounded and continuous function $F:(\C^{\times})^N \to \C$,
\begin{align}\label{20240923_16:19}
\lim_{k \to \infty} \E(F, \bxi_k) = 0.
\end{align}
\end{theorem}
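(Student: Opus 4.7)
The strategy is to derive Theorem \ref{BiluThm} from the quantitative bounds (Theorems \ref{Thm1} and \ref{Thm5}, and Corollaries \ref{Cor3} and \ref{Cor6}) proven earlier in the paper for mildly regular test functions, via a tightness plus approximation scheme. The argument splits into three steps followed by a standard triangle-inequality combination.

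First, I would establish tightness and reduce to a compact subset of $(\C^{\times})^N$. Writing the Weil height as a sum of normalized Mahler measures of the coordinate projections, the hypothesis $h(\bxi_k)\to 0$ forces, via standard Mahler-measure (or Jensen-type) estimates applied to the minimal polynomials, that for every $\eta>0$ the $\mu_{S_k}$-mass outside the polyannulus $A_\eta := \{\z \in (\C^{\times})^N \,:\, e^{-\eta}\le |z_j|\le e^{\eta},\ 1\le j\le N\}$ tends to $0$ as $k\to\infty$. Consequently, multiplying $F$ by a continuous cutoff that equals $1$ on a neighborhood of $(\mathbb{S}^1)^N$ and vanishes off some fixed $A_\eta$ alters $\E(F,\bxi_k)$ only by $o(1)$. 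Thus one may replace $F$ by a continuous function $\widetilde F$ of compact support inside $(\C^{\times})^N$.

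Second, I would approximate $\widetilde F$ by a mildly regular test function. Given $\epsilon>0$, Stone--Weierstrass (applied, for instance, to Laurent polynomials in $z_1,\bar z_1,\ldots,z_N,\bar z_N$) or a straightforward convolution/mollification construction yields a function $G_\epsilon$ that is compactly supported in $(\C^{\times})^N$, satisfies the regularity hypotheses required in Theorems \ref{Thm1} and \ref{Thm5}, and has $\|\widetilde F-G_\epsilon\|_\infty<\epsilon$. Since $\mu_{S_k}$ and $\mu_{(\mathbb{S}^1)^N}$ are probability measures, this yields $\E(\widetilde F-G_\epsilon,\bxi_k)\le 2\epsilon$ uniformly in $k$.

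Third, I would feed $G_\epsilon$ into the effective estimates of the paper, obtaining a bound of the form $\E(G_\epsilon,\bxi_k)\le C(G_\epsilon)\,\Omega(\bxi_k)$, where $\Omega(\bxi_k)\to 0$ combines the hypothesis $h(\bxi_k)\to 0$ (which controls the radial part through the quantitative height-dependent estimates) with the strictness of $\{\bxi_k\}$ (which controls the angular contribution via the character averages $\int \chi_{\n}\,\d\mu_{S_k}$, each nontrivial $\chi_{\n}$ corresponding to a proper algebraic subgroup of $\big(\overline{\Q}^{\times}\big)^N$ that contains $\bxi_k$ only finitely often). The triangle inequality then gives $\limsup_{k\to\infty}\E(F,\bxi_k)\le 2\epsilon$, and letting $\epsilon\to 0$ concludes.

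The main obstacle is in the third step, namely in showing that both contributions to $\Omega(\bxi_k)$ vanish in the limit. The effective bounds of the paper naturally dominate the radial decay and the ``high-frequency'' angular contribution (the latter is absorbed into $C(G_\epsilon)$ via the regularity norm), but leave a residual term involving finitely many ``low-frequency'' Fourier modes of $G_\epsilon$. Strictness is precisely what is needed to push these character sums to zero, and verifying that the quantitative statements of Theorems \ref{Thm1} and \ref{Thm5} bifurcate cleanly in this way -- so that the combined bound $\Omega(\bxi_k)$ is genuinely $o(1)$ when both hypotheses are in force -- is the crux of the argument.
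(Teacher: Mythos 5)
Your proposal is correct and follows essentially the same route as the paper's Appendix \ref{App_C}: truncate to a fixed compact log-radial neighborhood of the polycircle using the mass bound that $h(\bxi_k)\to 0$ provides, approximate uniformly there by a smooth compactly supported $G$, apply the effective estimates to $G$, and finish with the triangle inequality. One clarification on the ``obstacle'' you flag in the third step: it is not actually there. In the paper's formulation no residual low-frequency term survives that would require a separate, qualitative use of strictness mode by mode; Lemma \ref{New Lemma 1} together with the generalized degree bounds \emph{every} nontrivial character average by $2\sqrt{6\,h_{\mc{D}}(\bxi_k)\,\|\n\|_1}$, so Corollary \ref{Cor3} gives the single clean estimate $\E(G,\bxi_k)\le C(G)\,h_{\mc{D}}(\bxi_k)^{1/2}$. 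Strictness enters exactly once, through the Northcott property (Lemma \ref{Lem12}), to guarantee $\mc{D}(\bxi_k)\to\infty$ and hence $h_{\mc{D}}(\bxi_k)\to 0$.
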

This result was inspired by previous equidistribution results for points of small height on abelian varieties by Szpiro, Ullmo and Zhang \cite{SUZ, Z}, and the subject flourished with many generalizations to other notions of heights and places over the past decades; see, e.g., \cite{BH, BR, BPRS, C, CT, Gu, K, R, Y}. As already mentioned, our goal here is to provide effective bounds for the convergence \eqref{20240923_16:19} in terms of the regularity of the test function $F$, in the spirit of the previous works \cite{DNS, P}. 

\smallskip

For $\n  = (n_1, n_2, \ldots, n_N) \in \Z^N$ we consider the map $\chi^{\n}:\big(\overline{\mathbb{Q}}^{\times}\big)^N \to \overline{\mathbb{Q}}^{\times}$ given by 
$$\bxi = (\xi_1, \xi_2, \ldots ,\xi_N) \mapsto \chi^{\n}(\bxi) := \xi_1^{n_1}\xi_2^{n_2}\ldots \xi_N^{n_N}. $$
Following \cite{DNS}, we define the generalized degree of a point $\bxi \in \big(\overline{\mathbb{Q}}^{\times}\big)^N$ by 
\begin{equation}\label{20241104_14:58}
\mathcal{D}(\bxi) := \min_{\n \neq {\bm 0}} \big\{ \|\n\|_1  \deg\big(\chi^{\n}(\bxi)\big)\big\}\,,
\end{equation}
where $\|\n\|_1 = |n_1| + \ldots + |n_N|$. We observe that when $N=1$ the two notions of degree coincide, i.e., $\mathcal{D}(\xi) = \deg(\xi)$. Moreover, the Northcott property implies that a strict sequence $\{\bxi_k\}_{k \geq 1}$ with $\lim_{k\to \infty} h(\bxi_k) = 0$ must verify $\lim_{k\to \infty} \mathcal{D}(\bxi_k)  = \infty$; see Lemma \ref{Lem12} in Appendix \ref{App_B} for details. Throughout the paper, for $\bxi  \in \big(\overline{\mathbb{Q}}^{\times}\big)^N$, we let 
\begin{equation}\label{20241104_15:14}
h_{\mathcal{D}}(\bxi):= h(\bxi) + \frac{\log\big(2\mathcal{D}(\bxi)\big)}{3 \mathcal{D}(\bxi)}.
\end{equation}

\smallskip

Fourier analysis plays an important role in our study, as the regularity properties of a function $F:(\C^{\times})^N \to \C$ are related to the integrability properties of its Fourier transform. Letting $\T := \R/\Z$, we identify $\T^N \times \R^N$ with $(\C^{\times})^N$ via the logarithmic-polar change of coordinates
\begin{equation}\label{20251006_13:34}
(\btheta, \s) =\big( (\theta_1,  \ldots, \theta_N), (s_1, \ldots, s_N) \big) \mapsto \big(e^{2\pi i \theta_1 + s_1}, \ldots, e^{ 2\pi i \theta_N + s_N}\big) = (z_1, \ldots, z_N) = \z. 
\end{equation}
From now on, we regard our test functions as $F = F(\btheta, \s)$ with $(\btheta, \s) \in \T^N \times \R^N$ (it is equivalent to consider Fourier analysis on the locally compact abelian group $(\C^{\times})^N$, as in \cite{P}, or in $ \T^N \times \R^N$ via this change of coordinates; we adopt the latter). When $F \in L^1(\T^N \times \R^N)$, its classical Fourier transform  $\widehat{F}: \Z^N \times \R^N \to \C$ is defined by 
$$\widehat{F}(\n, \t):= \int_{\T^N} \int_{\R^N} F(\btheta, \s) \,e^{-2 \pi i \n \cdot \btheta} \, e^{-2 \pi i \t \cdot \s}\,\d \s \,\d \btheta.$$
By Plancherel's theorem, this operator extends to an isometry from $L^2(\T^N \times \R^N)$ to $L^2(\Z^N \times \R^N)$. If $\widehat{F} \in L^1(\Z^N \times \R^N)$, Fourier inversion holds and we have
\begin{equation}\label{20241025_10:59}
F(\btheta, \s) = \sum_{\n \in \Z^N} \int_{\R^N} \widehat{F}(\n, \t)\,e^{2 \pi i \n \cdot \btheta} \, e^{2 \pi i \t \cdot \s}\,\d \t.
\end{equation}
Given $F: \T^N \times \R^N\to \C$ continuous, throughout the paper we let $F_{\bf 0}: \T^N \to \C$ be the restriction $F_{\bf 0}(\btheta):= F(\btheta, {\bm 0})$. Its Fourier transform $\widehat{F_{\bf 0}}:\Z^N \to \C$ is given by
$$\widehat{F_{\bf 0}}(\n) = \int_{\T^N} F_{\bf 0}(\btheta) \,e^{-2 \pi i  \n \cdot \btheta} \,\d \btheta.$$
Although the notation for the Fourier transform is the same, it should be sufficiently clear from the context in which locally compact abelian group we are applying it (either $\T^N \times \R^N$ or $\T^N$).

\subsection{Effective equidistribution of Galois orbits} We now move on to describing our main results. We present here two points of view, depending on whether our regularity assumptions take place completely or partially on the Fourier space.  

\subsubsection{Regularity of $F$ on the Fourier space} Our first point of view considers the regularity of the test function $F$ given in terms of integrability conditions on the Fourier side. This extends the approach of Petsche \cite{P}. Let $\mc{A}$ be the following class of test functions:
\begin{equation*}
\mc{A} = \big\{ F: \T^N \times \R^N\to \C \ {\rm continuous};\ F \in L^2(\T^N \times \R^N);\ {\rm and} \ \widehat{F} \in L^1(\Z^N \times \R^N)\big\}.
\end{equation*}
Observe that if $F \in \mc{A}$, then $F_{\bf 0}: \T^N \to \C$ is continuous and  
\begin{align*}
\widehat{F_{\bf 0}}(\n) = \int_{\R^N} \widehat{F}(\n, \t)\,\d \t.
\end{align*}
Our first result is as follows.

\begin{theorem} \label{Thm1}
Let $G: (0,\infty) \to (0,\infty)$ and $H: (0,\infty) \to (0,\infty)$ be functions such that 
\begin{enumerate}
\item[(i)] $G$ and $H$ are non-decreasing and $\lim_{x \to \infty} G(x) =  \lim_{x \to \infty} H(x) = \infty$;
\smallskip
\item[(ii)] The functions $G(x)/\sqrt{x} $ and $H(x)/\sqrt{x}$ are non-increasing.
\smallskip
\end{enumerate}
Then, for any $\bxi \in \big(\overline{\mathbb{Q}}^{\times}\big)^N$ and $F \in \mc{A}$ we have
\begin{align}\label{20241026_14:46}
\begin{split}
\E(F, \bxi) & \leq \frac{2 \,C_1(F, G)}{G\big( (8 \pi  h(\bxi))^{-1}\big)} + \frac{C_2(F,H)}{H \big( (24 \,h_{\mathcal{D}} (\bxi))^{-1} \big)}\,,
\end{split}
\end{align}
where
\begin{align}\label{20250116_14:25}
C_1(F,G) := \sum_{\n \in \Z^N} \int_{\R^N} \left| \widehat{F}(\n, \t)\right|G( \|\t\|_{\infty})\, \d\t\,,
\end{align}
and
\begin{align}\label{20250116_14:26}
C_2(F,H) := \sum_{\n \in \Z^N\setminus \{\0\}} \left| \widehat{F_{\0}}(\n)\right|H( \|\n\|_{1}).
\end{align}
\end{theorem}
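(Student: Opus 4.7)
The plan is to decompose $\E(F,\bxi)$ into a radial contribution (measuring how far $\bxi$ sits from the unit polycircle) and an angular contribution (measuring how far the angular projection of the Galois orbit is from equidistribution), and to bound each by one of the two terms on the right-hand side of \eqref{20241026_14:46}. Writing each $\z\in S$ as $\z=(\btheta(\z),\s(\z))$ under the coordinate change \eqref{20251006_13:34} and using $\int F\,\d\mu_{(\mathbb{S}^1)^N}=\int_{\T^N}F_{\bf 0}(\btheta)\,\d\btheta$, the triangle inequality gives
\begin{equation*}
\E(F,\bxi)\leq \underbrace{\Big|\tfrac{1}{|S|}\sum_{\z\in S}\big(F(\btheta(\z),\s(\z))-F_{\bf 0}(\btheta(\z))\big)\Big|}_{I} + \underbrace{\Big|\tfrac{1}{|S|}\sum_{\z\in S}F_{\bf 0}(\btheta(\z))-\int_{\T^N}F_{\bf 0}(\btheta)\,\d\btheta\Big|}_{II}.
\end{equation*}

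For Term $I$, I would apply the Fourier inversion formula \eqref{20241025_10:59} to write $F(\btheta,\s)-F_{\bf 0}(\btheta)=\sum_{\n}e^{2\pi i\n\cdot\btheta}\int_{\R^N}\widehat{F}(\n,\t)(e^{2\pi i\t\cdot\s}-1)\,\d\t$, average over $\z\in S$, and take absolute values termwise to obtain
\begin{equation*}
I\leq\sum_{\n\in\Z^N}\int_{\R^N}|\widehat{F}(\n,\t)|\cdot\tfrac{1}{|S|}\sum_{\z\in S}\big|e^{2\pi i\t\cdot\s(\z)}-1\big|\,\d\t.
\end{equation*}
Using $|e^{ix}-1|\leq\min(2,|x|)$ together with the elementary height bound $\tfrac{1}{|S|}\sum_{\z}\|\s(\z)\|_1\leq 2h(\bxi)$ (which follows from the Mahler-measure formula and the identity $h(\xi)=h(\xi^{-1})$), the inner average is at most $\min(2,4\pi\|\t\|_\infty h(\bxi))$. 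Setting $T_0:=(8\pi h(\bxi))^{-1}$, a short case split yields the pointwise dampening $\min(2,4\pi\|\t\|_\infty h(\bxi))\leq 2G(\|\t\|_\infty)/G(T_0)$: for $\|\t\|_\infty\geq T_0$ it holds because $G$ is non-decreasing and the LHS is $\leq 2$; for $\|\t\|_\infty<T_0$ it follows from $G(\|\t\|_\infty)\geq G(T_0)\sqrt{\|\t\|_\infty/T_0}$, a consequence of $G(x)/\sqrt{x}$ being non-increasing. This gives $I\leq 2C_1(F,G)/G(T_0)$.

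For Term $II$, I would expand $F_{\bf 0}$ as an absolutely convergent Fourier series on $\T^N$ (using $\widehat{F_{\bf 0}} \in \ell^1(\Z^N)$, which is inherited from $\widehat{F}\in L^1(\Z^N\times\R^N)$) to get $II\leq\sum_{\n\neq {\bf 0}}|\widehat{F_{\bf 0}}(\n)|\cdot|E_\n|$, where $E_\n:=\tfrac{1}{|S|}\sum_{\z\in S}e^{2\pi i\n\cdot\btheta(\z)}$. Setting $\eta:=\chi^\n(\bxi)$ and noting that $\z\mapsto\chi^\n(\z)$ covers the Galois orbit of $\eta$ with uniform multiplicity $|S|/\deg\eta$, one has $E_\n=\deg(\eta)^{-1}\sum_\sigma\sigma\eta/|\sigma\eta|$. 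The key technical input is a one-variable effective Bilu-type estimate of the form
\begin{equation*}
|E_\n|^2\leq 24\,h(\eta)+\frac{8\log(2\deg\eta)}{\deg\eta},
\end{equation*}
which I would establish separately, in the spirit of Petsche \cite{P}. Combining it with $h(\chi^\n(\bxi))\leq\|\n\|_1 h(\bxi)$, the defining inequality $\|\n\|_1\deg(\eta)\geq\mathcal{D}(\bxi)$, and an elementary monotonicity argument for $\log(2x)/x$, one deduces $|E_\n|^2\leq 24\|\n\|_1 h_{\mathcal{D}}(\bxi)=\|\n\|_1/T_1$ with $T_1:=(24h_{\mathcal{D}}(\bxi))^{-1}$. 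An analogous dampening argument, now comparing the trivial bound $|E_\n|\leq 1$ with $|E_\n|\leq\sqrt{\|\n\|_1/T_1}$ and using the hypotheses on $H$, yields $|E_\n|\leq H(\|\n\|_1)/H(T_1)$ for every $\n\neq {\bf 0}$, whence $II\leq C_2(F,H)/H(T_1)$. Adding the bounds on $I$ and $II$ gives \eqref{20241026_14:46}.

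The principal obstacle is the one-variable angular estimate; the numerical constants $24$ and $3$ in the theorem statement ultimately originate from it. Its proof requires a careful analysis of $\sum_\sigma\sigma\eta/|\sigma\eta|$ using the identity $\alpha/|\alpha|=\alpha+(1-|\alpha|)(\alpha/|\alpha|)$, bounds on $|\sum_j\alpha_j|$ via Mahler/Landau-type control of the elementary symmetric functions of the minimal polynomial, and control of $\sum_j\big||\alpha_j|-1\big|$ via the $L^1$-Mahler inequality $\tfrac{1}{\deg\eta}\sum_j|\log|\alpha_j||\leq 2h(\eta)$. Once that lemma is in hand, the remainder of the argument is a routine combination of Fourier inversion, the height identity in Term $I$, and a two-regime thresholding dictated by the regularity hypotheses on $G$ and $H$.
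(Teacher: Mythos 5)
Your overall reduction is correct and coincides with the paper's: the split into the log-radial term $I$ and the angular term $II$ becomes, after Fourier inversion, exactly the paper's decomposition $I_1+I_2$; your treatment of $I$ (the pointwise bound $\min\{2,4\pi\|\t\|_\infty h(\bxi)\}$, which is in fact slightly sharper than the bound $\min\{2,2\sqrt{8\pi h(\bxi)\|\t\|_\infty}\}$ that the paper obtains by a Chebyshev-type splitting of the orbit, followed by the two-regime comparison against $G$) and your thresholding of $II$ against $H$ are both sound and yield the stated constants.

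The genuine gap is the one-variable angular estimate $|E_\n|^2\le 24\,h(\eta)+8\log(2\deg\eta)/\deg\eta$, which you correctly identify as the crux but whose proposed proof would not succeed. Controlling $|\sum_j\alpha_j|$ through the elementary symmetric functions of the \emph{minimal} polynomial of $\eta$ is hopeless: the Mahler-type coefficient bound gives only $|c_{d-1}|\le d\,M(P_\eta)=d\,e^{d\,h(\eta)}$, so $\tfrac1d\big|\sum_j\alpha_j\big|$ is bounded only by $e^{d\,h(\eta)}$, which blows up in the relevant regime where $h(\eta)\to 0$ but $d\,h(\eta)\to\infty$; there is no effective smallness of $\tfrac1d\sum_j\alpha_j$ extractable from the minimal polynomial alone. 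The paper's Lemma \ref{New Lemma 1} instead (i) invokes Siegel's lemma in the Bombieri--Vaaler form (Lemma \ref{Siegel}) to produce an auxiliary polynomial $Q$ vanishing at $\eta$ to high order $m$ with controlled integer coefficients, and (ii) converts $\big|\sum_j e^{2\pi i\phi_j}\big|$, summed over the normalized roots of $Q$, into $4\int\log^+\big|Q_1(e^{2\pi i\theta})\big|\,\d\theta$ via the Fourier coefficient $\widehat\psi(1)=-\tfrac12$ of $\psi(\theta)=\log|2\sin\pi\theta|$ together with Jensen's formula; this second step is the actual refinement over Petsche. Note also that ``in the spirit of Petsche'' is insufficient even granting Siegel's lemma: Petsche's version of this angular lemma yields only a cube-root dependence on the height, whereas your thresholding (and the hypothesis that $H(x)/\sqrt{x}$ be non-increasing) requires the square-root bound $|E_\n|\le 2\sqrt{6\|\n\|_1 h_{\mathcal{D}}(\bxi)}$. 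Until that lemma is proved with the stated strength, the bound on $II$, and hence the theorem, is not established.
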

\noindent{\sc Remark:} The appearance of $\|\n\|_{1}$ in  \eqref{20250116_14:26} is due to our definition of the generalized degree $\mathcal{D}(\bxi)$ in \eqref{20241104_14:58}, chosen simply to facilitate the comparison with \cite{DNS}; see \eqref{20241025_16:22} below for the precise point where it appears in the proof. Had we defined the generalized degree slightly differently, by setting $\mathcal{D}_p(\bxi) := \min_{\n \neq {\bm 0}} \big\{ \|\n\|_p  \deg(\chi^{\n}(\bxi))\big\}$ for $1 \leq p\leq \infty$, where $\|\n\|_p = \big(\sum_{j=1}^N |n_j|^p\big)^{1/p}$ if $1 \leq p < \infty$ and $\|\n\|_{\infty} = \max_{j} |n_j|$, and replaced $\mathcal{D}(\bxi)$ by $\mathcal{D}_p(\bxi)$ in the definition \eqref{20241104_15:14}, our proof would work {\it ipsis litteris} yielding the same estimate as in \eqref{20241026_14:46}, with $\|\n\|_1$ replaced by $\|\n\|_p$ in \eqref{20250116_14:26}. The same remark applies to all of our results in this Introduction. 

\smallskip

Naturally, if the constants in \eqref{20250116_14:25} and \eqref{20250116_14:26} are not finite, estimate \eqref{20241026_14:46} is trivially true. For the theorem to be meaningful, it is expected that these constants be finite, which should be seen as the regularity assumptions on the test function $F$ (i.e., a bit more integrability than simply $ \widehat{F} \in L^1(\Z^N \times \R^N)$). For instance, for $0 < \gamma \leq 1/2$, we may consider $G(x) = H(x) = x^{\gamma}$. Using the fact that $h(\bxi) \leq h_{\mathcal{D}}(\bxi)$, Theorem \ref{Thm1} plainly yields the following corollary.

\begin{corollary} \label{Cor3} Let $0 < \gamma \leq 1/2$ be given. Then, for any $\bxi \in \big(\overline{\mathbb{Q}}^{\times}\big)^N$ and $F \in \mc{A}$ we have
\begin{align}\label{20241026_15:13}
\E(F, \bxi)  & \leq C(F) \, h_{\mathcal{D}}(\bxi)^{\gamma},
\end{align}
with 
\begin{equation}\label{20241026_15:38}
C(F) := \sum_{\n \in \Z^N} \int_{\R^N} \left| \widehat{F}(\n, \t)\right| \Big(2 (8\pi)^{\gamma} \|\t\|_{\infty}^{\gamma} + 24^{\gamma} \|\n\|_1^{\gamma}\Big)\,\d\t.
\end{equation}
Moreover, the exponent $\gamma$ in \eqref{20241026_15:13} is sharp in the following sense: there exists a function $F \in \mc{A}$ with \eqref{20241026_15:38} finite, and a sequence $\{\bxi_k\}_{k \geq 1} \subset \big(\overline{\mathbb{Q}}^{\times}\big)^N$ with $\lim_{k \to \infty} h_{\mathcal{D}}(\bxi_k) = 0$, for which
\begin{equation}\label{20250227_14:39}
\E(F, \bxi_k) \geq \frac{h_{\mathcal{D}}(\bxi_k)^{\gamma}}{|\log h_{\mathcal{D}}(\bxi_k)|\, \big(\log |\log h_{\mathcal{D}}(\bxi_k)|\big)^2 }.
\end{equation}
\end{corollary}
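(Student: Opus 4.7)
The upper bound \eqref{20241026_15:13} specializes Theorem \ref{Thm1} with the choice $G(x) = H(x) = x^\gamma$. These functions satisfy hypotheses (i)-(ii) precisely when $\gamma \leq 1/2$, since $G(x)/\sqrt{x} = x^{\gamma - 1/2}$ is non-increasing iff $\gamma \leq 1/2$. Substituting into \eqref{20241026_14:46} and using $h(\bxi) \leq h_{\mathcal{D}}(\bxi)$ yields
\[
\E(F, \bxi) \leq \bigl(2(8\pi)^\gamma C_1(F, G) + 24^\gamma C_2(F, H)\bigr) h_{\mathcal{D}}(\bxi)^\gamma.
\]
The identity $\widehat{F_{\0}}(\n) = \int_{\R^N} \widehat{F}(\n, \t)\, \d\t$ recorded just before Theorem \ref{Thm1} gives $|\widehat{F_{\0}}(\n)| \leq \int |\widehat{F}(\n, \t)|\, \d\t$, which bounds $C_2(F, H)$ by the weighted absolute-Fourier integral over $\Z^N \times \R^N$; collecting terms recovers exactly $C(F)$ as in \eqref{20241026_15:38}.

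For the sharpness, the plan is to exhibit an explicit witness. Let $\{p_k\}_{k \geq 1}$ enumerate the primes and set $\eta_k := p_k^{1/p_k^N}$; by Eisenstein at $p_k$, the polynomial $x^{p_k^N} - p_k$ is the minimal polynomial of $\eta_k$, so $\deg(\eta_k) = p_k^N$ and $h(\eta_k) = \log p_k/p_k^N$. I would define
\[
\bxi_k := \bigl(\eta_k,\, \eta_k^{p_k},\, \eta_k^{p_k^2},\, \ldots,\, \eta_k^{p_k^{N-1}}\bigr).
\]
Writing $\chi^\n(\bxi_k) = \eta_k^{L(\n)}$ with $L(\n) = \sum_{j=1}^N p_k^{j-1} n_j$, a base-$p_k$ digit argument shows $L(\n) \neq 0$ for every $\n \neq \0$ with $\|\n\|_\infty < p_k$; in that range $\deg(\chi^\n(\bxi_k)) \geq p_k^N/|L(\n)| \geq p_k/\|\n\|_1$, and combined with $\|\n\|_\infty \geq p_k \Rightarrow \|\n\|_1 \geq p_k$ and the matching upper bound witnessed by $\n = (0, \ldots, 0, 1)$, one obtains $\mathcal{D}(\bxi_k) = p_k$ and hence $h(\bxi_k) \asymp h_{\mathcal{D}}(\bxi_k) \asymp \log p_k/p_k$. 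In the coordinates \eqref{20251006_13:34}, the Galois orbit of $\bxi_k$ is distributed along a lattice in $\T^N$ with common radial part $s_{j,k} = (\log p_k/p_k^N)\, p_k^{j-1}$; in particular $s_{N,k} = \log p_k/p_k \to 0^+$.

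The test function is taken in tensor-product form: $F(\btheta, \s) := G(\btheta)\,\psi_1(s_1) \cdots \psi_{N-1}(s_{N-1})\,\phi(s_N)$, where $G \in C^\infty(\T^N)$ satisfies $\int_{\T^N} G\, \d\btheta \neq 0$, each $\psi_j$ is Schwartz on $\R$ with $\psi_j(0) = 1$, and $\phi: \R \to \C$ is defined by
\[
\widehat{\phi}(t) := \frac{c}{(1+|t|)^{1+\gamma}\,\log(2+|t|)\,(\log\log(4+|t|))^2},
\]
with a constant $c > 0$ to be fixed. The iterated substitution $u = \log\log(4+|t|)$ verifies $\int|\widehat{\phi}(t)|\,|t|^\gamma\, \d t < \infty$, so $F \in \mc{A}$ with $C(F)$ finite. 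Splitting $\phi(0) - \phi(s) = \int \widehat{\phi}(t)(1 - \cos(2\pi t s))\, \d t$ into the regions $|t| \leq 1/|s|$ and $|t| > 1/|s|$ yields the two-sided estimate $|\phi(s) - \phi(0)| \asymp |s|^\gamma/(\log(1/|s|)\,(\log\log(1/|s|))^2)$ as $s \to 0^+$. Using that the $\btheta$-Riemann sum for the smooth $G$ converges to $\int_{\T^N} G\, \d\btheta$ with super-polynomially small error and that $\psi_j(s_{j,k}) = 1 + O(1/p_k)$ for $j < N$, the dominant contribution becomes
\[
\E(F, \bxi_k) \asymp \Bigl|\int_{\T^N} G\, \d\btheta\Bigr|\cdot|\phi(\log p_k/p_k) - \phi(0)| \asymp \frac{h_{\mathcal{D}}(\bxi_k)^\gamma}{|\log h_{\mathcal{D}}(\bxi_k)|\,(\log|\log h_{\mathcal{D}}(\bxi_k)|)^2},
\]
and choosing $c$ sufficiently large produces \eqref{20250227_14:39}.

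The main obstacle is the delicate calibration of polylogarithmic factors in $\widehat{\phi}$: one must place the decay exactly on the threshold of convergence of $\int|\widehat{\phi}(t)|\,|t|^\gamma\, \d t$, so that the modulus of continuity of $\phi$ at the origin is as weak as admissibility in $\mc{A}$ permits. The borderline factor $(\log|t|)^{-1}(\log\log|t|)^{-2}$ in $\widehat{\phi}$ is the slowest admissible decay --- dictated by the convergence of $\int(u\,\log u\,(\log\log u)^2)^{-1}\,\d u$ (via $v = \log\log u$) --- and this precise threshold is what forces the $|\log h_{\mathcal{D}}|\,(\log|\log h_{\mathcal{D}}|)^2$ denominator in \eqref{20250227_14:39}.
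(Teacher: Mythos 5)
Your proposal is correct. The upper bound is handled exactly as in the paper: specialize Theorem \ref{Thm1} to $G(x)=H(x)=x^{\gamma}$ (admissible precisely for $\gamma\leq 1/2$), use $h(\bxi)\leq h_{\mathcal{D}}(\bxi)$, and absorb $C_2$ via $|\widehat{F_{\0}}(\n)|\leq\int_{\R^N}|\widehat{F}(\n,\t)|\,\d\t$. For the sharpness, your mechanism is the same as the paper's --- Eisenstein polynomials of the form $x^{d}-d$ with $d$ prime, so that the Galois orbit sits at a fixed small radial displacement, paired with a test function whose Fourier decay is calibrated at the critical threshold $|t|^{-(\ast+\gamma)}(\log|t|)^{-1}(\log\log|t|)^{-2}$, and a lower bound extracted from $\int\widehat{\phi}(t)(1-\cos(2\pi ts))\,\d t$ split at $|t|\sim 1/|s|$ --- but your witnesses differ in two respects. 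The paper takes $N$ independent roots of $x^{d_{k,j}}-d_{k,j}$ with distinct primes $d_{k,j}\in(2^k,2^{k+1})$ and a function depending only on $\s$ through a radially decaying Fourier transform on $\R^N$, which forces an $N$-dimensional polar-coordinate computation; you instead take powers of a single $\eta_k=p_k^{1/p_k^N}$, concentrating the entire effect into the last coordinate $s_N=\log p_k/p_k$, and tensor a one-dimensional critical $\phi$ with harmless Schwartz and smooth angular factors, reducing everything to a one-variable modulus-of-continuity estimate (your computation of $\mathcal{D}(\bxi_k)=p_k$ via the base-$p_k$ digit argument and the norm bound $\deg(\eta_k^{L})\geq p_k^N/|L|$ checks out, and the Riemann-sum and $\psi_j$ error terms are indeed $O(p_k^{-A})$ and $O(\log p_k/p_k^2)$, negligible against the main term). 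Your version buys a simpler computation at the cost of a slightly more delicate verification of the generalized degree; the paper's buys a trivially computed $\mathcal{D}$ at the cost of the $N$-dimensional integral estimate. Both yield \eqref{20250227_14:39} after the same final renormalization of $F$ by a constant.
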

The sharpness of Corollary \ref{Cor3} is discussed in Section \ref{Qual_Sharp}. The term $h_{\mathcal{D}}(\bxi)$ is the quantity that goes to zero in Bilu's equidistribution theorem, therefore, when looking at an estimate like \eqref{20241026_15:13}, the higher the power of $h_{\mathcal{D}}(\bxi)$ the better the estimate, quantitatively speaking. For instance, the main result of \cite{P} (in dimension $N=1$) presents the upper bound 
\begin{align*}
\E(F, \xi)  \leq c \left( \sum_{n \in \Z} \int_{\R} \left| \widehat{F}(n, t)\right| \big(1 + |t| +  |n|\big)\,\d t\right)h_{\mathcal{D}}(\xi)^{1/3}\,,
\end{align*}
where $c>0$ is a constant. Corollary \ref{Cor3}, for instance with $\gamma = 1/2$, improves this estimate both in the exponent of $h_{\mathcal{D}}(\xi)$ and in the milder regularity assumption on $F$. 

\smallskip

From the analysis point of view, the assumption that \eqref{20241026_15:38} is finite can be thought of as $F$ having a fractional derivative of order $\gamma$. The exponent $1/2$ for $h_{\mathcal{D}}(\xi)$ is the natural limit of our method, and even if $F$ has a fractional derivative of order $a >1/2$, one would use the basic fact that $y^{1/2} \lesssim 1 + y^{a}$ to circle back to our result in Corollary \ref{Cor3} with $\gamma = 1/2$. It is also interesting to contemplate Theorem \ref{Thm1} with slowly increasing functions $G$ and $H$, for instance $G(x) = H(x) = \log(2+x)$. 

\smallskip

As suggested by Theorem \ref{Thm1}, some level of information on the regularity of $F$ is required in order to quantify Bilu's equidistribution. In our current setup, one may wonder whether the mere assumption that $\widehat{F} \in L^1(\Z^N \times \R^N)$ is enough (after all, by the Riemann-Lebesgue lemma, this implies the continuity of $F$). We answer this question affirmatively, in terms of the tail function
\begin{align*}
\nu_{\widehat{F}}(y) := \sum \int_{\|\n\|_1 + \|\t\|_1 >y} \left|\widehat{F}(\n, \t)\right| \d\t\,,
\end{align*}
that verifies $\lim_{y \to \infty} \nu_{\widehat{F}}(y) =0$.  

\begin{theorem}\label{Thm4}
Let $W: (0,\infty) \to (0,\infty)$ be a non-decreasing function such that $\lim_{x\to \infty} W(x) = \infty$ and $\lim_{x\to \infty} W(x)/x = 0$. Then, for any $\bxi \in \big(\overline{\mathbb{Q}}^{\times}\big)^N$ and $F \in \mc{A}$ we have 
\begin{align*}
\E(F, \bxi)  \leq 2\big(\sqrt{8\pi} + \sqrt{6}\big) \sqrt{h_{\mc{D}}(\bxi) \, W\big(h_{\mc{D}}(\bxi)^{-1}\big)}\, \big\|\widehat{F}\big\|_{L^1(\Z^N \times \R^N)}  \,+\, 3\,\nu_{\widehat{F}} \Big(W\big(h_{\mc{D}}(\bxi)^{-1}\big)\Big).
\end{align*}
\end{theorem}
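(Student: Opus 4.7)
The plan is to combine a Fourier-space truncation with Theorem~\ref{Thm1}. Set $R := W\big(h_{\mc{D}}(\bxi)^{-1}\big)$ and decompose $F = F_R + (F - F_R)$, where the low-frequency piece $F_R$ is defined through its Fourier transform
\[
\widehat{F_R}(\n,\t) \,:=\, \widehat{F}(\n,\t)\,\one_{\{\|\n\|_1 + \|\t\|_1 \leq R\}}.
\]
The goal is to bound $\E(F_R,\bxi)$ via Theorem~\ref{Thm1} applied to the smooth piece, and to control the residual $\E(F-F_R,\bxi)$ directly by $\nu_{\widehat{F}}(R)$.

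First I would verify $F_R \in \mc{A}$, so that Theorem~\ref{Thm1} applies. Since $|\widehat{F_R}| \leq |\widehat{F}|$ pointwise, $\widehat{F_R}$ lies in $L^1 \cap L^2$; Plancherel then places $F_R \in L^2(\T^N \times \R^N)$, while Fourier inversion (valid because $\widehat{F_R}\in L^1$) exhibits $F_R$ as bounded and continuous. The natural choice in Theorem~\ref{Thm1} is $G(x) = H(x) = \sqrt{x}$, which trivially meets hypotheses (i)--(ii). On the support of $\widehat{F_R}$ we have $\|\t\|_\infty \leq \|\t\|_1 \leq R$ and $\|\n\|_1 \leq R$; using $|\widehat{(F_R)_{\0}}(\n)| \leq \int |\widehat{F_R}(\n,\t)|\,\d\t$ for the second constant, we obtain
\[
C_1(F_R, G),\ C_2(F_R, H) \,\leq\, \sqrt{R}\;\|\widehat{F}\|_{L^1(\Z^N\times\R^N)}.
\]
Substituting these into \eqref{20241026_14:46}, using $h(\bxi) \leq h_{\mc{D}}(\bxi)$ to homogenize the two denominators, and noting $\sqrt{24} = 2\sqrt{6}$, the smooth part contributes exactly $2(\sqrt{8\pi}+\sqrt{6})\sqrt{h_{\mc{D}}(\bxi)\,W(h_{\mc{D}}(\bxi)^{-1})}\,\|\widehat{F}\|_{L^1}$, matching the first summand in the theorem.

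For the high-frequency residual, Fourier inversion gives the pointwise bound $\|F - F_R\|_\infty \leq \|\widehat{F-F_R}\|_{L^1} = \nu_{\widehat{F}}(R)$. Since $\mu_S$ and $\mu_{(\SS^1)^N}$ are probability measures, the crude triangle inequality $\E(F-F_R,\bxi) \leq 2\,\|F-F_R\|_\infty$ already yields a term of size $2\,\nu_{\widehat{F}}(R)$, which is absorbed by the slightly larger constant $3$ in the statement. Combining the two pieces through $\E(F,\bxi) \leq \E(F_R,\bxi) + \E(F-F_R,\bxi)$ and substituting $R = W(h_{\mc{D}}(\bxi)^{-1})$ completes the argument.

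I do not anticipate a genuine obstacle: the decomposition is a textbook low/high-frequency split, and the balance is dictated precisely by the two hypotheses on $W$. The condition $W(x)/x \to 0$ is what makes the smooth bound non-trivial (it forces $h_{\mc{D}}(\bxi)\,R \to 0$), while $W(x) \to \infty$ drives $\nu_{\widehat{F}}(R)$ to zero. The only mild technicality is the membership $F_R \in \mc{A}$, which follows at once from the fact that $\widehat{F_R}$ inherits both the $L^1$ and $L^2$ integrability of $\widehat{F}$, and that $\widehat{F_R}\in L^1$ yields continuity of $F_R$ via pointwise inversion.
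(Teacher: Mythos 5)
Your proof is correct, and it reaches the stated bound (in fact with the slightly better tail constant $2\,\nu_{\widehat{F}}(R)$ in place of $3\,\nu_{\widehat{F}}(R)$). The underlying mechanism is the same as the paper's — a frequency cutoff at scale $R = W\big(h_{\mc{D}}(\bxi)^{-1}\big)$, with $\sqrt{\cdot}$-decay on low frequencies and the $L^1$ tail on high frequencies — but the packaging is genuinely different. The paper does not truncate $F$; it re-enters the proof of Theorem \ref{Thm1} at the intermediate estimates for $I_1$ and $I_2$ and simply replaces the optimized cutoffs $M = (8\pi h(\bxi))^{-1}$ and $M = (24\,h_{\mc{D}}(\bxi))^{-1}$ by the common value $M = W\big(h_{\mc{D}}(\bxi)^{-1}\big)$, bounding the two tail integrals (over $\|\t\|_{\infty} > M$ and $\|\n\|_1 > M$ respectively) by $\nu_{\widehat{F}}(M)$ each. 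You instead split $F = F_R + (F - F_R)$ via the sharp Fourier cutoff $\one_{\{\|\n\|_1 + \|\t\|_1 \le R\}}$ and apply Theorem \ref{Thm1} as a black box with $G = H = \sqrt{x}$ to the low-frequency piece, using $\|F - F_R\|_{\infty} \le \nu_{\widehat{F}}(R)$ and the fact that $\mu_S$, $\mu_{(\SS^1)^N}$ are probability measures for the remainder. Your route is more modular (it reuses the theorem rather than its proof) at the small cost of checking $F_R \in \mc{A}$, which you do correctly; the paper's route avoids introducing the auxiliary function but requires the reader to re-trace the earlier argument. Both yield the same first term since your constants $C_1(F_R, G), C_2(F_R, H) \le \sqrt{R}\,\|\widehat{F}\|_{L^1}$ exactly reproduce the paper's low-frequency bounds.
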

As an application of this result, one may take, for instance, $W(x) = \sqrt{x}$ to get 
\begin{align*}
\E(F, \bxi) \leq 2\big(\sqrt{8\pi} + \sqrt{6}\big) h_{\mc{D}}(\bxi)^{1/4}\, \big\|\widehat{F}\big\|_{L^1(\Z^N \times \R^N)}  + 3\,\nu_{\widehat{F}} \big(h_{\mc{D}}(\bxi)^{-1/2}\big).
\end{align*}

\smallskip

\subsubsection{Angular regularity of $F$ on the Fourier space} Our second point of view considers the regularity of the log-radial part of $F$ on the physical space, and the regularity of the angular part of $F$ on the Fourier space. This extends the approach of D'Andrea, Narv\'aez-Clauss and Sombra \cite{DNS}.

\smallskip

Here we start simply with $F: \T^N \times \R^N\to \C$ continuous. Let $\omega:[0,\infty) \to [0,\infty)$ be a non-decreasing function such that $\lim_{x \to 0} \omega(x) = 0$. We say that $F$ admits a uniform modulus of continuity $\omega$ at $\s =\0$ if, for each $\btheta \in \T^N$, we have 
\begin{align}\label{20241028_10:36}
\big| F(\btheta, \s)  - F(\btheta, \0) \big| \leq \omega(|\s|). 
\end{align}
For instance, if $\omega(x) = cx$, condition \eqref{20241028_10:36} indicates a Lipschitz regularity at the origin, whereas if $\omega(x) = cx^{\gamma}$, with $0 < \gamma < 1$, condition \eqref{20241028_10:36} indicates a H\"{o}lder regularity at the origin. Our next result is the following.

\begin{theorem}\label{Thm5}
Let $\omega:[0,\infty) \to [0,\infty)$ be a non-decreasing and concave function such that $\lim_{x \to 0} \omega(x) = 0$, and assume that $F: \T^N \times \R^N\to \C$ is continuous and verifies \eqref{20241028_10:36}. Let $H: (0,\infty) \to (0,\infty)$ be a non-decreasing function such that $\lim_{x \to \infty} H(x) = \infty$ and $H(x)/\sqrt{x}$ is non-increasing. Then, for any $\bxi \in \big(\overline{\mathbb{Q}}^{\times}\big)^N$ we have
\begin{align}\label{20241028_12:03}
\begin{split}
\E(F, \bxi) \leq \omega\big(2 h (\bxi)\big) + \frac{C_2(F,H)}{H \big( (24 \,h_{\mathcal{D}} (\bxi))^{-1} \big)}\,,
\end{split}
\end{align}
where
\begin{align}\label{20250116_14:27}
C_2(F,H) := \sum_{\n \in \Z^N\setminus \{\0\}} \left| \widehat{F_{\0}}(\n)\right|H( \|\n\|_{1}).
\end{align}
\end{theorem}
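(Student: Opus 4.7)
The plan is to split $F$ into the part depending only on the angular variable $\btheta$ and a remainder vanishing on the polycircle, then to control the two resulting pieces by independent tools: the modulus of continuity \eqref{20241028_10:36} for the remainder, and the angular Fourier-sum bound already developed in the proof of Theorem \ref{Thm1} for the main piece.

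The first step is the decomposition $F(\btheta,\s) = F_{\0}(\btheta) + R(\btheta,\s)$ with $R(\btheta,\s) := F(\btheta,\s) - F(\btheta,\0)$. Because $R$ vanishes on $\s = \0$, its integral against $\mu_{(\SS^1)^N}$ is zero, and the triangle inequality reduces the estimate to bounding, separately,
\[
I := \left|\int R\,\d\mu_S\right| \qquad \text{and} \qquad II := \left|\int F_{\0}\,\d\mu_S - \int F_{\0}\,\d\mu_{(\SS^1)^N}\right|.
\]
For $I$, I would use \eqref{20241028_10:36} pointwise, followed by monotonicity of $\omega$ (with $|\s| \leq \|\s\|_1$) and Jensen's inequality (via concavity of $\omega$ and $\omega(0) = 0$) to obtain
\[
I \leq \omega\!\left(\frac{1}{|S|}\sum_{\z \in S}\|\s(\z)\|_1\right).
\]
Projecting the Galois orbit onto each coordinate and writing $|\log| = \log^+ + \log^-$, the inequality $\frac{1}{\deg(\xi_j)}\sum_{\alpha \sim \xi_j}|\log|\alpha|| \leq 2 h(\xi_j)$ follows from combining the Mahler-measure expressions for $\xi_j$ and $\xi_j^{-1}$ with $h(\xi_j^{-1}) = h(\xi_j)$ and the integrality of the leading and constant coefficients of the minimal polynomial of $\xi_j$. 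Summing over $j$ yields $I \leq \omega(2h(\bxi))$.

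For $II$, I may assume $C_2(F,H) < \infty$; since $H(1) > 0$, this forces $\widehat{F_{\0}} \in \ell^1(\Z^N)$ and a uniformly convergent Fourier expansion of $F_{\0}$. Subtracting the $\n = \0$ mode, which exactly cancels $\int F_{\0}\,\d\mu_{(\SS^1)^N}$, and exchanging summation and integration,
\[
II \leq \sum_{\n \neq \0}\bigl|\widehat{F_{\0}}(\n)\bigr|\left|\frac{1}{|S|}\sum_{\z \in S} e^{2\pi i \n \cdot \btheta(\z)}\right|.
\]
Each inner sum is the Galois average of $\alpha/|\alpha|$ over the conjugates of $\eta := \chi^{\n}(\bxi)$. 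Bounding it is precisely the angular piece isolated in the proof of Theorem \ref{Thm1}; together with $h(\eta) \leq \|\n\|_1 h(\bxi)$, the bound $\deg(\eta) \geq \mathcal{D}(\bxi)/\|\n\|_1$ coming from \eqref{20241104_14:58}, and the monotonicity of $H(x)/\sqrt{x}$ (which handles the gap between the scales $(24\,h_{\mc{D}}(\bxi))^{-1}$ and $(24\,\|\n\|_1 h_{\mc{D}}(\bxi))^{-1}$), it yields the factor $H(\|\n\|_1)/H\bigl((24\,h_{\mc{D}}(\bxi))^{-1}\bigr)$. Summed against $|\widehat{F_{\0}}(\n)|$, this produces exactly $C_2(F,H)/H\bigl((24\,h_{\mc{D}}(\bxi))^{-1}\bigr)$.

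The principal obstacle is the quantitative angular exponential-sum bound with the specific absolute constant $24$ and the precise shape of $h_{\mc{D}}$; this is the Fourier-analytic heart of the whole framework, typically obtained via Beurling--Selberg majorants and minorants of short arcs converted into Mahler-measure estimates for $P_\eta$ and its reciprocal. Once that bound is isolated from the proof of Theorem \ref{Thm1}, Theorem \ref{Thm5} is the clean corollary obtained by trading the radial Fourier control on $F$ for the sharper local hypothesis \eqref{20241028_10:36}.
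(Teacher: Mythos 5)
Your proposal is correct and follows essentially the same route as the paper: the same decomposition $F=F_{\0}+R$, the same $\omega$--Jensen--Mahler-measure argument for the radial piece, and the same reduction of the angular piece to the exponential-sum bound and the $H(x)/\sqrt{x}$ splitting already carried out in the proof of Theorem \ref{Thm1}. One small inaccuracy in your closing aside: the key exponential-sum estimate (Lemma \ref{New Lemma 1}) is obtained in the paper from the Bombieri--Vaaler form of Siegel's lemma combined with the Fourier expansion of $\log|2\sin(\pi\theta)|$ and Jensen's formula, not from Beurling--Selberg majorants, but since you invoke that bound as a black box this does not affect the validity of your argument.
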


Again, if the constant in \eqref{20250116_14:27} is not finite, estimate \eqref{20241028_12:03} is trivially true. For the theorem to be meaningful, it is expected that this constant be finite, which should be seen as the angular regularity assumption on the test function $F$. For instance, for $0 < \gamma \leq 1/2$, we may define the H\"{o}lder constant 
\begin{equation*}
L_{\gamma}(F) := \sup_{\substack{(\btheta, \s) \in \T^N \times \R^N\\ \s \neq \0}} \frac{\big| F(\btheta, \s)  - F(\btheta, \0) \big|}{|\s|^{\gamma}}.
\end{equation*}
If $L_{\gamma}(F) < \infty$, then \eqref{20241028_10:36} holds with $\omega(x) = L_{\gamma}(F)\, x^{\gamma}$. If we also consider $H(x) = x^{\gamma}$, Theorem \ref{Thm5} yields the following corollary (recall that $h(\bxi) \leq h_{\mathcal{D}}(\bxi)$).

\begin{corollary} \label{Cor6} Let $F: \T^N \times \R^N\to \C$ be a continuous function and let $0 < \gamma \leq 1/2$. Then, for any $\bxi \in \big(\overline{\mathbb{Q}}^{\times}\big)^N$ we have 
\begin{align}\label{20241029_15:48}
\E(F, \bxi)  & \leq C(F) \, h_{\mathcal{D}}(\bxi)^{\gamma},
\end{align}
with 
\begin{equation}\label{20241029_15:47}
C(F) := 2^{\gamma}L_{\gamma}(F)  + 24^{\gamma} \sum_{\n \in \Z^N }\left| \widehat{F_{\0}}(\n)\right|   \|\n\|_1^{\gamma}.
\end{equation}
Moreover, the exponent $\gamma$ in \eqref{20241029_15:48} is sharp in the following sense: there exists a continuous function $F$ with \eqref{20241029_15:47} finite, and a sequence $\{\bxi_k\}_{k \geq 1} \subset \big(\overline{\mathbb{Q}}^{\times}\big)^N$ with $\lim_{k \to \infty} h_{\mathcal{D}}(\bxi_k) = 0$, for which
\begin{equation}\label{20250227_14:41}
\E(F, \bxi_k) \geq h_{\mathcal{D}}(\bxi_k)^{\gamma}.
\end{equation}
\end{corollary}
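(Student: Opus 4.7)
The result splits into an upper bound and a qualitative sharpness claim, and I would treat them separately. The upper bound is a direct specialization of Theorem~\ref{Thm5}, while the sharpness requires constructing an explicit example.

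For the upper bound, the plan is to apply Theorem~\ref{Thm5} with the power-law choices $\omega(x) := L_\gamma(F)\, x^\gamma$ and $H(x) := x^\gamma$. The restriction $0 < \gamma \le 1/2$ ensures both sets of hypotheses are met: $\omega$ is non-decreasing and concave (since $\gamma \le 1$) with $\omega(0+) = 0$, while $H$ is non-decreasing, tends to infinity, and $H(x)/\sqrt{x} = x^{\gamma-1/2}$ is non-increasing. The modulus condition \eqref{20241028_10:36} holds for this $\omega$ by the very definition of $L_\gamma(F)$. Substituting into \eqref{20241028_12:03} and computing $\omega(2h(\bxi)) = 2^\gamma L_\gamma(F)\,h(\bxi)^\gamma$ and $H\bigl((24 h_{\mathcal{D}}(\bxi))^{-1}\bigr) = (24 h_{\mathcal{D}}(\bxi))^{-\gamma}$, together with $h(\bxi) \le h_{\mathcal{D}}(\bxi)$, yields \eqref{20241029_15:48} with constant \eqref{20241029_15:47} (the $\n = \0$ term vanishes, so the sum may equally be written over all of $\Z^N$). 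This step requires no new input beyond Theorem~\ref{Thm5}.

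For the sharpness \eqref{20250227_14:41}, the key observation is that $F$ may depend only on the angular variables: setting $F(\btheta, \s) = g(\btheta)$ automatically forces $L_\gamma(F) = 0$ and reduces the problem to constructing a continuous $g \colon \T^N \to \C$ with $\sum_{\n} |\widehat{g}(\n)|\,\|\n\|_1^\gamma < \infty$, together with a strict sequence $\{\bxi_k\}$ with $h_{\mathcal{D}}(\bxi_k) \to 0$, for which the Galois-averaged discrepancy $\bigl|\int g\, \d\mu_{S_k} - \widehat{g}(\0)\bigr|$ is at least the right-hand side of \eqref{20250227_14:41}. A canonical candidate for $g$ is a one-variable lacunary Fourier series $g(\theta_1) = \sum_{j} a_j\, e^{2\pi i m_j \theta_1}$ with geometrically growing frequencies $m_j$ and coefficients $|a_j|$ sitting just at the H\"older-$\gamma$ regularity threshold, tempered by slowly varying logarithmic factors so that $\sum_j |a_j|\,m_j^\gamma$ barely converges. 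For the arithmetic sequence $\{\bxi_k\}$, a natural source is algebraic numbers of small height whose Galois orbit produces character sums that vanish except on the prescribed lacunary frequencies; roots of unity of orders tuned to $m_{j_k}$ are the prototype. Orthogonality over the orbit then singles out a single Fourier mode of size $\asymp |a_{j_k}|$, and calibrating $h_{\mathcal{D}}(\bxi_k)$ against $m_{j_k}$ produces the rate \eqref{20250227_14:41}.

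The main obstacle is this last calibration: one needs $\{\bxi_k\}$ whose height, generalized degree, and orbit interact with the chosen lacunary frequency in a sufficiently precise, quantitatively controlled way to match the exact logarithmic denominator. The Fourier-side construction of $g$ is classical and mild; the arithmetic step is the substantive one, and in the paper it is presumably carried out in tandem with the parallel sharpness construction for Corollary~\ref{Cor3} in the section referenced as \S\ref{Qual_Sharp}.
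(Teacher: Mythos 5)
The upper bound half of your argument is exactly the paper's: apply Theorem \ref{Thm5} with $\omega(x)=L_\gamma(F)x^\gamma$ and $H(x)=x^\gamma$, check the monotonicity/concavity hypotheses using $0<\gamma\le 1/2$, and use $h(\bxi)\le h_{\mathcal D}(\bxi)$. That part is complete and correct.

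For the sharpness claim you take a genuinely different route, and it is not carried to completion. You propose a purely \emph{angular} example ($F(\btheta,\s)=g(\btheta)$, so $L_\gamma(F)=0$) built from a lacunary Fourier series and torsion points, and you explicitly defer the decisive step --- calibrating the frequencies, the coefficients, and $h_{\mathcal D}(\bxi_k)$ against one another. That deferred step is where the difficulty actually lives: with geometrically growing frequencies $m_j$ (so $\log m_j\simeq j$) and coefficients $|a_j|$ chosen large enough that the surviving Fourier mode matches the target rate $h_{\mathcal D}(\bxi_k)^\gamma/(|\log h_{\mathcal D}(\bxi_k)|(\log|\log h_{\mathcal D}(\bxi_k)|)^2)$, the series $\sum_j |a_j|\,m_j^\gamma$ behaves like $\sum_j j^{\gamma-1}(\log j)^{-2}$, which \emph{diverges} for every $\gamma>0$; one is forced to super-lacunary frequencies (e.g. $m_j\sim 2^{2^j}$, taken prime) to make \eqref{20241029_15:47} finite while keeping the mode large enough. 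One must also check, for $N\ge 2$, that the chosen torsion points satisfy $\mathcal D(\bxi_k)\to\infty$ (components equal to the same root of unity give $\mathcal D(\bxi_k)=2$ via $\n=(1,-1,0,\dots,0)$). So as written the sharpness part is a plan with a genuine gap, not a proof.

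The paper's construction goes in the opposite direction and is far simpler: it takes the purely \emph{radial} function $F(\btheta,\s)=|\s|^\gamma$, for which $F_{\0}\equiv 0$ (so the Fourier sum in \eqref{20241029_15:47} vanishes identically) and $L_\gamma(F)=1$, together with the sequence of Section \ref{Sec_5.1} built from roots $\xi_{k,j}$ of the Eisenstein polynomials $x^{d_{k,j}}-d_{k,j}$. Every conjugate has modulus $d_{k,j}^{1/d_{k,j}}$, so $|\s(\balpha)|\simeq_N M_k^{-1}$ for \emph{all} $\balpha$ in the orbit, and a one-line computation gives $\E(F,\bxi_k)\simeq_N M_k^{-\gamma}\simeq_N h_{\mathcal D}(\bxi_k)^\gamma$ --- in fact without any logarithmic loss, after which one renormalizes $F$ by a dimensional constant to get \eqref{20250227_14:41}. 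The moral difference is that the paper exhibits sharpness of the $\omega(2h(\bxi))$ term in Theorem \ref{Thm5}, whereas your sketch aims at sharpness of the angular term; both are legitimate targets for proving the corollary's sharpness statement, but only the radial one is executed here with no arithmetic subtleties.
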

The sharpness of Corollary \ref{Cor6} is discussed in Section \ref{Qual_Sharp}. The main result of \cite{DNS} presents the upper bound 
\begin{align*}
\E(F, \bxi)   \leq c \left( {\rm Lip}(F) + \sum_{\n \in \Z^N }\left| \widehat{F_{\0}}(\n)\right|   \|\n\|_1 \right)  h_{\mathcal{D}}(\bxi)^{1/2},
\end{align*}
where ${\rm Lip}(F)$ is the Lipschitz constant of $F$ in $\T^N \times \R^N$, and $c>0$ is a constant. In particular, Corollary \ref{Cor6} when $\gamma =1/2$ yields the same quantitative estimate in terms of $h_{\mathcal{D}}(\bxi)^{1/2}$, under less restrictive regularity assumptions on $F$, namely H\"{o}lder-$(1/2)$ continuity on the log-radial part at $\s = 0$, and a fractional $(1/2)$-derivative on the angular part. As we shall see in the proof, the gain on the angular part is a slightly subtler issue. 

\smallskip

In the same spirit of Theorem \ref{Thm4}, it is possible to quantify Bilu's equidistribution if one only assumes $\widehat{F_{\0}} \in L^1(\Z^N)$ for the angular part. For this we work with the tail function 
$$\nu_{\widehat{F_{\0}}}(y):= \sum_{\|n\|_1 >y} \big|\widehat{F_{\0}}(\n)\big|$$
that verifies $\lim_{y \to \infty} \nu_{\widehat{F_{\0}}}(y) =0$.

\begin{theorem}\label{Thm7}
Let $\omega:[0,\infty) \to [0,\infty)$ be a non-decreasing and concave function such that $\lim_{x \to 0} \omega(x) = 0$, and assume that $F: \T^N \times \R^N\to \C$ is continuous and verifies \eqref{20241028_10:36}. Let $W: (0,\infty) \to (0,\infty)$ be a non-decreasing function such that $\lim_{x\to \infty} W(x) = \infty$ and $\lim_{x\to \infty} W(x)/x = 0$. Then, for any $\bxi \in \big(\overline{\mathbb{Q}}^{\times}\big)^N$ we have 
\begin{align*}
\E(F, \bxi)   \leq  \ \omega\big(2 h (\bxi)\big)  + 2\sqrt{6} \sqrt{h_{\mc{D}}(\bxi) \, W\big(h_{\mc{D}}(\bxi)^{-1}\big)}\, \big\|\widehat{F_{\0}}\big\|_{L^1(\Z^N)} + \nu_{\widehat{F_{\0}}} \Big(W\big(h_{\mc{D}}(\bxi)^{-1}\big)\Big).
\end{align*}
\end{theorem}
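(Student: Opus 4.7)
The approach is to combine the physical-space reduction used in Theorem 5 with a Fourier truncation in the angular variable, in the same spirit in which Theorem 4 extends Theorem 1.

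First, since $\mu_{(\SS^1)^N}$ is supported on $\s=\0$ we have $\int F\,\d\mu_{(\SS^1)^N}=\int F_{\0}\,\d\mu_{\T^N}$, so by the triangle inequality
\[
\E(F,\bxi) \leq \left|\int F\,\d\mu_S - \int F_{\0}\,\d\nu_S\right| + \left|\int F_{\0}\,\d\nu_S - \int F_{\0}\,\d\mu_{\T^N}\right|,
\]
where $\nu_S$ denotes the pushforward of $\mu_S$ to $\T^N$ under $(\btheta,\s)\mapsto\btheta$. The first summand is bounded by $\int\omega(|\s|)\,\d\mu_S$, and Jensen's inequality (applicable since $\omega$ is concave and nondecreasing) combined with the Mahler-measure identity $|S|^{-1}\sum_{\z\in S}|\log|z_j||=2h(\xi_j)$, which yields $|S|^{-1}\sum_{\z\in S}|\s_{\z}|\leq 2h(\bxi)$, produces the bound $\omega(2h(\bxi))$, exactly as in the first step of Theorem 5.

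For the second summand I would set $\Lambda := W\big(h_{\mc{D}}(\bxi)^{-1}\big)$ and split $F_{\0}=F_{\0}^{\mathrm{low}}+F_{\0}^{\mathrm{high}}$, with $F_{\0}^{\mathrm{low}}$ the partial Fourier sum over $\{\|\n\|_1\leq\Lambda\}$ and $F_{\0}^{\mathrm{high}}$ its complement. The assumption $\widehat{F_{\0}}\in L^1(\Z^N)$ makes the decomposition well-defined pointwise and both pieces continuous. For the high-frequency tail, the vanishing of $\int e^{2\pi i\n\cdot\btheta}\,\d\mu_{\T^N}$ for $\n\neq\0$, together with the trivial estimate $|\int e^{2\pi i\n\cdot\btheta}\,\d\nu_S|\leq 1$, gives
\[
\left|\int F_{\0}^{\mathrm{high}}\,\d\nu_S - \int F_{\0}^{\mathrm{high}}\,\d\mu_{\T^N}\right| \leq \sum_{\|\n\|_1>\Lambda}\big|\widehat{F_{\0}}(\n)\big| = \nu_{\widehat{F_{\0}}}(\Lambda).
\]
For the low-frequency piece I would apply Theorem 5 itself to the test function $G(\btheta,\s):=F_{\0}^{\mathrm{low}}(\btheta)$, which is independent of $\s$ and hence satisfies \eqref{20241028_10:36} with the admissible choice $\omega\equiv 0$. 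With $H(x)=\sqrt{x}$, the denominator in \eqref{20241028_12:03} becomes $\sqrt{24\,h_{\mc{D}}(\bxi)}=2\sqrt{6\,h_{\mc{D}}(\bxi)}$ and $C_2(G,H)=\sum_{0<\|\n\|_1\leq\Lambda}|\widehat{F_{\0}}(\n)|\sqrt{\|\n\|_1}$, whence
\[
\left|\int F_{\0}^{\mathrm{low}}\,\d\nu_S - \int F_{\0}^{\mathrm{low}}\,\d\mu_{\T^N}\right| \leq 2\sqrt{6\,h_{\mc{D}}(\bxi)}\sum_{0<\|\n\|_1\leq\Lambda}\big|\widehat{F_{\0}}(\n)\big|\sqrt{\|\n\|_1} \leq 2\sqrt{6}\sqrt{h_{\mc{D}}(\bxi)\,\Lambda}\,\big\|\widehat{F_{\0}}\big\|_{L^1(\Z^N)},
\]
after bounding $\sqrt{\|\n\|_1}\leq\sqrt{\Lambda}$ on the low-frequency support. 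Summing the three contributions yields the theorem.

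I expect no genuine obstacle beyond bookkeeping: one must track the identity $\sqrt{24}=2\sqrt{6}$ to recover the stated constant, and verify that $\omega\equiv 0$ is admissible in Theorem 5 (it is, since it is concave, nondecreasing, and has the required limit at $0$, and the $\omega(2h(\bxi))$ summand of \eqref{20241028_12:03} then vanishes identically). Structurally, the argument is precisely the tail-truncation strategy that passes from Theorem 1 to Theorem 4, here grafted onto the physical-Fourier split that underlies Theorem 5.
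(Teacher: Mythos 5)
Your proof is correct and follows essentially the same route as the paper: the log-radial part is bounded by $\omega\big(2h(\bxi)\big)$ exactly as in Theorem \ref{Thm5}, and the angular sum is split at $\|\n\|_1 \le W\big(h_{\mc{D}}(\bxi)^{-1}\big)$, with the bound \eqref{20241025_16:37} on the low frequencies and the trivial bound (giving the tail $\nu_{\widehat{F_{\0}}}$) on the high frequencies. Your packaging of the low-frequency estimate as an application of Theorem \ref{Thm5} with $H(x)=\sqrt{x}$ and $\omega\equiv 0$ is a legitimate repackaging of the computation the paper performs in \eqref{20241026_18:23}.
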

As an application, with $W(x) = \sqrt{x}$ in Theorem \ref{Thm7} we get 
\begin{align*}
\E(F, \bxi)  \leq   \omega\big(2 h (\bxi)\big) + 2 \sqrt{6}\, h_{\mc{D}}(\bxi)^{1/4}\, \big\|\widehat{F_{\0}}\big\|_{L^1(\Z^N)} + \nu_{\widehat{F_{\0}}} \big(h_{\mc{D}}(\bxi)^{-1/2}\big).
\end{align*}

\smallskip

\subsection{Appendices} At the end of the paper we include three Appendices that complement the material discussed here. Appendix \ref{App_A} presents a bound for the multidimensional angular discrepancy in terms of the generalized height $h_{\mc{D}}$. This is morally in the context of Bilu's equidistribution, but not exactly there since it considers test functions which are characteristic functions of angular sectors (which are not continuous). However, we show that our methods still apply to treat this situation and provide a meaningful upper bound, given in Theorem \ref{Ang_Disc}. 
Appendix \ref{App_B} collects some auxiliary lemmas from the work of D'Andrea, Narv\'aez-Clauss and Sombra \cite{DNS} for the convenience of the reader. Appendix \ref{App_C} briefly completes the proof of Bilu's equidistribution theorem using our effective estimates for a dense subclass of test functions.

\subsection{A word on notation} Throughout the text, $N$-dimensional vectors are denoted with bold font (e.g., $\bxi$, $\n$, $\s$) and numbers with regular font (e.g., $\theta,n,z$). For $\t = (t_1, \ldots, t_N) \in \R^N$ and $1 \leq p <\infty$ we set $\|t\|_p:= \big(\sum_{j=1}^N |t_j|^p\big)^{1/p}$ and $\|\t\|_{\infty} := \max_{1 \leq j \leq N} |t_j|$. We also set $|\t| = \|\t\|_2$ as the usual Euclidean norm. We write $A \lesssim B$ if $A \leq C B$ for a certain constant $C>0 $, and we write $A \simeq B$ if $A \lesssim B$ and $B \lesssim A$ (parameters of dependence of such a constant  $C>0$ appear as a subscript in the inequality sign). For $x \in \R$, $\lfloor x\rfloor$ denotes the integer part of $x$, i.e., the largest integer that is smaller than or equal to $x$.

\section{Consequences of Siegel's lemma}

The following result is a variant of a similar lemma appearing in the work of Petsche \cite[Lemma 2.1]{P} (there, it is stated for a parameter $0<\lambda \leq 1$, while here we restrict to $0<\lambda \leq 1/2$ in order to get a slightly stronger estimate), and is a consequence of Siegel's lemma in the formulation of Bombieri and Vaaler \cite{BV, BV2}. We provide a brief proof for the convenience of the reader.

\begin{lemma}\label{Siegel}
Let $\xi \in \overline{\mathbb{Q}}^{\times}$ be an algebraic number of degree $d = [\Q(\xi):\Q]$, let $0<\lambda \leq 1/2$ be a parameter, and let $m$ be a positive integer large enough so that $\lfloor \lambda m d \rfloor \geq 1$. Then there exists a non-zero polynomial $Q(x)= \sum_{n=0}^{D}b_nx^n \in \mathbb{Z}[x]$ of degree $D$ and with $Q(0)\neq 0$, that satisfies the following conditions: 
  \begin{itemize}
  \item[(i)] $D\leq (1+\lambda)md$.
  \item[(ii)] $Q(x)$ vanishes at $\xi$ with multiplicity at least $m$.
  \item[(iii)] $\log\, \max_{n}|b_n| \leq \frac{3m^2d^2}{2\lfloor \lambda m d\rfloor }\left(h(\xi) + \frac{\log (2d)}{3d}\right)$.  
  \end{itemize}
  \end{lemma}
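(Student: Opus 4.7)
The plan is to apply the Bombieri--Vaaler formulation of Siegel's lemma to a linear system encoding the vanishing conditions, and then to divide out any power of $x$ at the end to enforce $Q(0)\neq 0$.

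First, I would set $D:=\lfloor(1+\lambda)md\rfloor$, so that condition (i) is automatic. For an unknown $Q(x)=\sum_{n=0}^D b_n x^n$, the requirement that $\xi$ be a root of multiplicity at least $m$ is equivalent to the system
\[
\sum_{n=j}^{D} b_n \binom{n}{j}\xi^{n-j}=0,\qquad j=0,1,\ldots,m-1,
\]
of $m$ linear equations over $\Q(\xi)$. Using the $\Q$-basis $\{1,\xi,\ldots,\xi^{d-1}\}$ of $\Q(\xi)$, each equation splits into $d$ equations over $\Q$. After clearing denominators (multiplying through by the leading coefficient $c_d$ of the minimal polynomial $P_\xi$ sufficiently many times), we obtain a homogeneous system $A\mathbf{b}=\mathbf{0}$ with $A\in\Z^{M\times L}$, where $M=md$ and $L=D+1$. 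Because $\lfloor\lambda md\rfloor\geq 1$, we have $L-M\geq\lfloor\lambda md\rfloor\geq 1$, so there is room for a nontrivial integer solution.

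Second, I would apply the Bombieri--Vaaler Siegel lemma \cite{BV,BV2} to this system. The key input is a bound on the height of the row span of $A$, i.e.\ essentially on $\sqrt{\det(A A^T)}$ divided by the content of $A$. To estimate it, I would bound the Weil height of each matrix entry via
$h\bigl(\binom{n}{j}\xi^{n-j}\bigr)\leq \log\binom{n}{j}+(n-j)\,h(\xi)$, with $n-j\leq D\leq (1+\lambda)md$ and $\binom{n}{j}\leq 2^{D}$; the binomial contribution is absorbed into the $\log(2d)/(3d)$ term. Summing over the $M=md$ equations and passing from the Weil height over $\Q(\xi)$ to the absolute bound on the rational entries (which introduces the factor $d$ through the base change) yields a total log-height of the row system of order $m^2d^{\,2}\bigl(h(\xi)+\tfrac{\log(2d)}{3d}\bigr)$. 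Bombieri--Vaaler then produces a nonzero integer vector $\mathbf{b}$ with
\[
\log\max_{n}|b_n|\ \leq\ \frac{1}{L-M}\,\bigl(\text{total log-height}\bigr)\ \leq\ \frac{3\,m^2d^{\,2}}{2\lfloor\lambda m d\rfloor}\left(h(\xi)+\frac{\log(2d)}{3d}\right),
\]
which is exactly (iii). The sharper numerical constant $3/2$ (rather than the $3$ that would appear in Petsche's version) comes from using the restriction $\lambda\leq 1/2$, so that $1+\lambda\leq 3/2$, when absorbing $D\leq(1+\lambda)md$ into the estimate.

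Third, I would enforce $Q(0)\neq 0$ by a pruning step. If $Q(0)=0$, let $k\geq 1$ be maximal with $x^k\mid Q(x)$ and replace $Q$ by $\widetilde Q(x):=Q(x)/x^k\in\Z[x]$. Since $\xi\in\overline{\Q}^{\times}$ is nonzero, $\widetilde Q$ still vanishes at $\xi$ with multiplicity $\geq m$ (so (ii) holds), its degree $D-k$ still satisfies (i), and its coefficient list is a shifted subset of that of $Q$, so the bound (iii) is preserved. This produces the required polynomial with $\widetilde Q(0)\neq 0$.

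The main obstacle I foresee is the bookkeeping in the second step: landing on the precise constant $3/2$ in (iii) requires sharply choosing $D$, carefully handling the logarithmic contribution of the binomial coefficients, and comparing the height of the coefficient matrix over $\Q(\xi)$ with its integral normalization over $\Q$ without loss of a further factor. The pruning step and the choice of $D$ are routine once the Bombieri--Vaaler bound is in place.
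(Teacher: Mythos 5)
Your overall architecture matches the paper's: both routes go through the Bombieri--Vaaler version of Siegel's lemma, both use $\lambda\le 1/2$ only to absorb $1+\lambda\le 3/2$ into the constant, and your final pruning step (dividing out a power of $x$ to force $Q(0)\neq 0$) is exactly what the paper does. The difference is that the paper does not set up the linear system itself: it quotes \cite[Corollary 2, Eq.\ (2.7)]{BV2}, which already produces a polynomial of degree $D\le N=md+L$ vanishing at $\xi$ to order $m$ with $\log\max_n|b_n|\le \frac{Nmd}{L}h(\xi)+\frac{N^2d}{L}u(m/N)$, and then only has to insert the bound $u(s)\le \frac{s^2}{2}(\log(1/4s)+3/2)$ from \cite[Eq.\ (2.4)]{BV2} and check that $\log(N/4m)+3/2\le\log(2d)$.

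The genuine gap is in your second step, precisely where you wave at the bookkeeping. The claim that the binomial coefficients satisfy $\binom{n}{j}\le 2^{D}$ and that ``the binomial contribution is absorbed into the $\log(2d)/(3d)$ term'' does not survive quantification. The budget that the $\frac{\log(2d)}{3d}$ term allows for the non-height part of the estimate is $\frac{3m^2d^2}{2\lfloor\lambda md\rfloor}\cdot\frac{\log(2d)}{3d}=\frac{m^2d\,\log(2d)}{2\lfloor\lambda md\rfloor}\approx \frac{m\log(2d)}{2\lambda}$. A Hadamard-type bound on $\det(AA^T)$ using $\binom{n}{j}\le 2^D$ contributes roughly $md\cdot D\log 2/\lfloor\lambda md\rfloor\approx \frac{(1+\lambda)md\log 2}{\lambda}$, which exceeds the budget by a factor of order $d/\log d$; even the sharper bound $\binom{n}{j}\le D^{j}$ leaves a contribution of order $\frac{m\log(mD)}{2\lambda}$, i.e.\ an unremovable $\log m$. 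This is fatal for the way the lemma is used downstream: in Lemma \ref{New Lemma 1} one lets $m\to\infty$ with $d$ fixed, and any $\log m$ surviving in part (iii) multiplied against $\frac{3md}{2\lfloor\lambda md\rfloor}$ would blow up. The whole point of \cite[Corollary 2]{BV2} is that the relevant Gram determinant is computed exactly (this is what the function $u(m/N)$, depending only on the ratio $m/N$, encodes), rather than estimated entrywise; reproducing that computation is the real content, and your sketch does not supply it. Everything else in your proposal (the choice of $D$, the count $L-M\ge\lfloor\lambda md\rfloor$, the pruning step) is fine.
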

\begin{proof} Let $L =\lfloor \lambda m d \rfloor$. This statement can be deduced from more general results in the paper of Bombieri and Vaaler \cite{BV2}. In particular, \cite[Corollary 2, Eq. (2.7)]{BV2} establishes the existence of a polynomial $Q(x) \in \Z[x]$, of degree $D \leq N = md + L$, vanishing at $\xi$ with multiplicity at least $m$, and such that 
\begin{align}\label{20241010_10:41}
\log\, \max_{n}|b_n| \leq \frac{Nmd}{L}  h(\xi) + \frac{N^2d}{L} u(m/N)\,,
\end{align}
where $u:(0,1) \to \R$ is a certain function that verifies the bound \cite[Eq. (2.4)]{BV2}
\begin{equation}\label{20241010_10:40}
u(s) \leq \frac{s^2}{2} \big( \log (1/4s) + 3/2\big).
\end{equation}
Parts (i) and (ii) follow directly, while part (iii) follows by applying the bound \eqref{20241010_10:40} into \eqref{20241010_10:41}, together with the fact that $\lambda \leq 1/2$. Finally, one can ensure that $Q(0) \neq 0$ by factoring out the appropriate power of $x$.
\end{proof}

The following lemma plays an important role in our argument. It is an essential tool in refining the regularity results of Petsche \cite{P} and D'Andrea, Narv\'aez-Clauss and Sombra \cite{DNS}. Our proof below brings in some ideas from Soundararajan's proof of the Erd\"{o}s-Tur\'{a}n inequality \cite[Lemmas 1 and 2]{S} in order to refine Petsche's \cite{P} original application of the Bombieri-Vaaler version of Siegel's lemma in this context. A result of similar flavour was recently obtained, via different methods, by Baker and Masser in \cite[Corollary 1.3]{BM}.

 \begin{lemma}\label{New Lemma 1} 
 Let $\xi$ be an algebraic number of degree $d = [\Q(\xi):\Q]$ and let $S = \{\xi = \xi_1, \xi_2, \ldots, \xi_d\} $ be its Galois orbit. Write $\xi_j = |\xi_j|\,e^{2\pi i \theta_j}$. Then 
\begin{align}\label{20241029_19:26}
 \left|\frac{1}{d}\sum_{j=1}^d e^{2\pi i \theta_j}\right| \leq 2 \sqrt{6} \left(h(\xi) + \frac{\log (2d)}{3d}\right)^{\frac{1}{2}}.
 \end{align}
 \end{lemma}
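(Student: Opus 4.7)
The plan is to apply Lemma \ref{Siegel} to $\xi$ to produce an auxiliary polynomial vanishing to high order at every Galois conjugate of $\xi$, and then to combine the multiplicity structure with an AM--GM / Jensen-type convexity argument in the spirit of \cite[Lemmas 1 \& 2]{S} to bound the angular sum $W := \sum_{j=1}^d e^{2\pi i \theta_j}$. As a preliminary observation, complex conjugation is a Galois automorphism, so the orbit $\{\xi_j\}$ is closed under $z \mapsto \bar z$ and hence $W\in\R$. Upper-bounding $|W|$ is thus equivalent to lower-bounding one of the two real sums $\sum_j \sin^2 \pi \theta_j$ or $\sum_j \cos^2\pi\theta_j$, via the identities $d \mp W = 2 \sum_j \sin^2\pi\theta_j$, $2 \sum_j \cos^2\pi\theta_j$; the two cases are symmetric (the second reduces to the first after replacing $\xi$ by $-\xi$, which has the same Weil height), so I focus on the former.

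Next I would apply Lemma \ref{Siegel} with $\lambda=1/2$ and a positive integer $m$ (with $\lfloor md/2\rfloor\ge 1$, to be optimized at the end), obtaining $Q(x)=\sum_{n=0}^D b_n x^n\in\Z[x]$ with $Q(0)\ne 0$, $D\le \tfrac{3}{2}md$, $Q$ vanishing at each Galois conjugate $\xi_j$ with multiplicity at least $m$, and $\log\max_n|b_n|\le \tfrac{3m^2d^2}{2\lfloor md/2\rfloor}\bigl(h(\xi)+\tfrac{\log(2d)}{3d}\bigr)$. The key identity is the local factorization $Q(w) = (w-\xi_j)^m S_j(w)$, which upon evaluation at the real point $\rho_j := |\xi_j|$ yields
\[
|Q(\rho_j)| \;=\; |\rho_j-\xi_j|^m\, |S_j(\rho_j)| \;=\; (2\rho_j|\sin\pi\theta_j|)^m\,|S_j(\rho_j)|,
\]
isolating a purely angular factor $|\sin\pi\theta_j|^m$ from the real number $Q(\rho_j)$.

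The main calculation then takes logarithms and sums over $j$, combining: (i) the upper bound $|S_j(\rho_j)|\le \binom{D}{m}\max_n|b_n|\max(1,\rho_j)^{D-m}$ coming from Taylor's formula and the multiplicity condition; (ii) the height identity $\sum_j\log^+\rho_j\le dh(\xi)$; and (iii) an $L^2$-Mahler-measure bound (the Soundararajan-style refinement of the $L^\infty$ bound used in \cite{P}) applied via the global factorization $Q = c_0^m P_\xi^m R$ with $R\in\Z[x]$ of degree $\le \lambda md$, which controls the average $\tfrac1d\sum_j\log|Q(\rho_j)|$. A Jensen / AM--GM step $\tfrac1d \sum_j x_j^2\ge\exp(\tfrac2d\sum_j\log|x_j|)$ combined with the sign reduction above transfers the resulting lower bound on $\sum_j\log|\sin\pi\theta_j|$ into a lower bound on $\sum_j\sin^2\pi\theta_j$, hence an upper bound on $|W|$; optimizing the free integer $m$ produces the square-root rate with the explicit constant $2\sqrt{6}$ in \eqref{20241029_19:26}.

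The main obstacle will be bounding the sum $\sum_j\log|R(\rho_j)|$ coming from the cofactor $R$: this polynomial has no direct arithmetic meaning beyond membership in $\Z[x]$, so a naive $L^\infty$ bound (as used in \cite{P}) loses a factor of $\sqrt{D+1}$ and yields only the weaker exponent $1/3$. The $L^2$-refinement drawn from \cite[Lemmas 1 \& 2]{S} closes precisely this gap, and is the technical heart of the improvement of the exponent to $1/2$.
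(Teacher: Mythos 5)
Your reduction of the bound on $W:=\sum_{j}e^{2\pi i\theta_j}$ to lower bounds on $\sum_j\sin^2\pi\theta_j$ and $\sum_j\cos^2\pi\theta_j$ is fine (and the observation that $W\in\R$ is correct), but the transfer from $\sum_j\log|\sin\pi\theta_j|$ to $\sum_j\sin^2\pi\theta_j$ via AM--GM is a fatal gap: that inequality loses a fixed constant factor that no strength of the auxiliary-polynomial input can recover. Since $\int_0^1\log|2\sin\pi\theta|\,\d\theta=0$, the best lower bound one can ever hope to extract for $\tfrac1d\sum_j\log|2\sin\pi\theta_j|$ is of the form $-\delta$ with $\delta$ small (it is attained, up to $o(1)$, by equidistributed angles). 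Feeding this into $\tfrac1d\sum_j\sin^2\pi\theta_j\ge\exp\big(\tfrac2d\sum_j\log|\sin\pi\theta_j|\big)$ gives only $\tfrac1d\sum_j\sin^2\pi\theta_j\ge\tfrac14e^{-2\delta}$ --- the geometric mean of $\sin^2\pi\theta_j$ for equidistributed angles is $e^{-2\log 2}=\tfrac14$, while the arithmetic mean is $\tfrac12$ --- and hence only $W\le d\big(1-\tfrac12e^{-2\delta}\big)=\tfrac d2+O(\delta d)$. So even with a perfect input, the convexity step cannot certify anything better than $|W|\lesssim d/2$, which says nothing about equidistribution. The structural reason is that the single quantity $\sum_j\log|2\sin\pi\theta_j|$ mixes all Fourier modes of the angular distribution (recall $\log|2\sin\pi\theta|=-\sum_{n\ne0}e^{2\pi in\theta}/(2|n|)$), and one cannot isolate the $n=\pm1$ mode from one evaluation. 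The paper's proof sidesteps this entirely: it projects the roots of $Q$ to the circle, forms $Q_1(z)=\prod_j(z-e^{2\pi i\phi_j})$, extracts the first Fourier coefficient \emph{exactly} via $\int_{\R/\Z}e^{2\pi i\theta}\log|Q_1(e^{2\pi i\theta})|\,\d\theta=-\tfrac12\sum_je^{2\pi i\phi_j}$, and then bounds the $L^1$ norm of $\log|Q_1|$ on the circle by $4\int\log^+|Q_1|$ using Jensen's formula together with the Schur-type comparison $|Q_1(z)|\le|Q(z)|/\sqrt{|b_0b_D|}$ for $|z|=1$.

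Two secondary problems. First, you freeze $\lambda=1/2$ in Lemma \ref{Siegel} and hope to recover the exponent $1/2$ by optimizing $m$ alone; but in the paper the square-root rate comes precisely from balancing an error of size $\lambda$ (from the $\le\lambda md$ extraneous roots of $Q$) against an error of size $\tfrac{6}{\lambda}\big(h(\xi)+\tfrac{\log(2d)}{3d}\big)$ (from the coefficient bound), with $\lambda\asymp\big(h(\xi)+\tfrac{\log(2d)}{3d}\big)^{1/2}$ and $m\to\infty$ serving only to remove the rounding in $\lfloor\lambda md\rfloor$ and the $\tfrac1{md}\log(\cdots)$ term; with $\lambda$ fixed there is nothing to balance and no square root can emerge. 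Second, the cofactor bound $|S_j(\rho_j)|\le\binom{D}{m}\max_n|b_n|\max(1,\rho_j)^{D-m}$ does not follow from Taylor's formula as stated, since dividing $Q$ by $(w-\xi_j)^m$ does not preserve coefficient bounds in that form; this is likely repairable, but the AM--GM issue above is not.
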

 
 \begin{proof} If the quantity on the right-hand side of \eqref{20241029_19:26} is greater than or equal to $1$, the estimate is trivially true. Assume this is not the case and set
 \begin{align}\label{20241024_15:58}
 \lambda := \sqrt{6} \left(h(\xi) + \frac{\log (2d)}{3d}\right)^{\frac{1}{2}} \leq \frac{1}{2}.
 \end{align}
Applying Lemma \ref{Siegel} we get a polynomial
$Q(x)= \sum_{n=0}^{D}b_nx^n \in \mathbb{Z}[x]$ of degree $D$, with $b_0 \neq 0$, such that $\xi$ is a zero of $Q$ with multiplicity at least $m$. Hence, all conjugates of $\xi$ are also zeros of $Q$ with multiplicity at least $m$. Let $\beta_j= r_j e^{2\pi i \phi_j}$, $j=1, 2, \ldots, D$, be all the zeros of $Q$, where $0\leq \phi_j<1$ for each $j$ (note also that $r_j >0$). Arrange the $\beta_j$'s so that each conjugate of $\xi$ occurs with multiplicity $m$ among the first $md$ roots $\beta_1, \ldots, \beta_{md}$. Now, by the triangle inequality, we have
 \begin{align}\label{20241104_12:08}
  \left|\frac{1}{d}\sum_{j=1}^d e^{2\pi i  \theta_j}\right|\leq  \left|\frac{1}{d}\sum_{j=1}^d e^{2\pi i  \theta_j} -  \frac{1}{md}\sum_{j=1}^D e^{2\pi i  \phi_j}\right| +  \left|\frac{1}{md}\sum_{j=1}^D e^{2\pi i  \phi_j}\right|. 
 \end{align}
Our arrangement of the roots of $Q$ yields
 \begin{align}\label{20241024_14:59}
 \left|\frac{1}{d}\sum_{j=1}^d e^{2\pi i  \theta_j} -  \frac{1}{md}\sum_{j=1}^D e^{2\pi i  \phi_j}\right|= \left|\frac{1}{md}\sum_{j=md+1}^D e^{2\pi i  \phi_j}\right|\leq \frac{D-md}{md}\leq \frac{(1+\lambda)md - md}{md}=\lambda.
 \end{align}
 
 \smallskip
 
 Let $Q_1(z) := \prod_{j=1}^{D}(z- e^{2\pi i \phi_j})$. Observe that, for $|z| =1$, 
 \begin{align*}
\big|z - e^{2\pi i \phi_j}\big|^2  \leq \left| \frac{z}{\sqrt{r_j}} - \sqrt{r_j}\,e^{2\pi i \phi_j} \right|^2\,,
\end{align*}
for each $ j=1, 2, \ldots , D$. Multiplying out we get, for $|z| =1$,
\begin{align}\label{Schur}
|Q_1(z)|  \leq  |Q(z)|/\sqrt{|b_0b_D|}.
\end{align}
For the periodic function $\psi(\theta) = \log|2 \sin(\pi \theta)|$, its Fourier coefficients are given by $\widehat{\psi}(0) = 0$ and $\widehat{\psi}(n) = - \tfrac{1}{2|n|}$ for $n  \in \Z \setminus \{0\}$. Hence, 
\begin{align*}
\begin{split}
&\int_{\R/\Z} e^{2 \pi i  \theta}  \log \big| Q_1\big(e^{2\pi i \theta}\big) \big|\,\d\theta = \sum_{j =1}^D \int_{\R/\Z}e^{2 \pi i \theta}  \log \big| e^{2\pi i \theta} - e^{2\pi i \phi_j}\big|\,\d\theta  \\
& = \sum_{j =1}^D e^{2 \pi i  \phi_j} \int_{\R/\Z} e^{2 \pi i  \tau}  \log \big| e^{2\pi i \tau} - 1\big|\,\d\tau  = \sum_{j =1}^D e^{2 \pi i  \phi_j} \int_{\R/\Z} e^{2 \pi i  \tau}  \log |2 \sin(\pi \tau)|\,\d\tau = - \frac{1}{2}\sum_{j =1}^D e^{2 \pi i \phi_j}.
\end{split}
\end{align*}
Therefore,
\begin{align}\label{20241010_11:45}
\left|\sum_{j =1}^D e^{2 \pi i  \phi_j}\right| &\leq 2 \int_{\R/\Z}  \bigg|\log \big| Q_1\big(e^{2\pi i \theta}\big) \big| \bigg|\,\d\theta \nonumber \\
& = 2\bigg(2 \int_{\R/\Z}  \log^+ \big| Q_1\big(e^{2\pi i \theta}\big) \big|\,\d\theta - \int_{\R/\Z} \log \big| Q_1\big(e^{2\pi i \theta}\big) \big| \d\theta\bigg)\\
& = 4\,\int_{\R/\Z}  \log^+ \big| Q_1\big(e^{2\pi i \theta}\big) \big|\,\d\theta,\nonumber 
\end{align}
where we used the identity $|\log |x| \big| = 2 \log^+ |x| - \log |x|$ in the second passage above, and Jensen's formula in the last passage. Using \eqref{Schur}, the fact that our coefficients $b_n \in \Z$ (with $b_0, b_D \neq 0$), and Lemma \ref{Siegel}, we get, for $|z| =1$, 
\begin{align}\label{20241024_14:53}
\begin{split}
\log^+ |Q_1(z)| & \leq \log^+\big(|Q(z)|/\sqrt{|b_0b_D|}\big)   \leq \log^+|Q(z)| \leq \log\big( (D+1)  \max_{n}|b_n|\big)\\
& \leq \log\big((1+\lambda)md +1\big) + \frac{3m^2d^2}{2\lfloor \lambda m d\rfloor }\left(h(\xi) + \frac{\log (2d)}{3d}\right).
\end{split}
\end{align}
From \eqref{20241010_11:45} and \eqref{20241024_14:53} we then arrive at
 \begin{align}\label{20241024_15:01}
 \left|\frac{1}{md}\sum_{j=1}^D e^{2\pi i  \phi_j}\right|\leq 4 \left( \frac{1}{md}\, \log {\big((1+ \lambda)md +1\big)} +\frac{3md}{2\lfloor\lambda md\rfloor} \bigg(h(\xi) + \frac{\log\, (2d)}{3d}\bigg) \right).
 \end{align}
From \eqref{20241104_12:08}, \eqref{20241024_14:59}, and \eqref{20241024_15:01}, we get
 \begin{align}\label{20241024_15:54}
  \left|\frac{1}{d}\sum_{j=1}^d e^{2\pi i  \theta_j} \right| \leq \lambda + 4 \left( \frac{1}{md}\, \log {\big((1+ \lambda)md +1\big)} +\frac{3md}{2\lfloor\lambda md\rfloor} \bigg(h(\xi) + \frac{\log\, (2d)}{3d}\bigg) \right).
 \end{align}
 Letting $m\rightarrow \infty$ in \eqref{20241024_15:54} yields
  \begin{align*}
  \left|\frac{1}{d}\sum_{j=1}^d e^{2\pi i  \theta_j} \right| \leq \lambda + \frac{6}{\lambda} \bigg(h(\xi) + \frac{\log\, (2d)}{3d}\bigg).
 \end{align*}
 Recalling the choice \eqref{20241024_15:58} for $\lambda$, we arrive at the desired bound. 
\end{proof}

\noindent {\sc Remark}: In the setup of Lemma \ref{New Lemma 1}, we observe that the following bound holds for all $n \in \Z \setminus \{0\}$: 
\begin{align*}
 \left|\frac{1}{d}\sum_{j=1}^d e^{2\pi i n \theta_j}\right| \leq 2 \sqrt{6|n|} \left(h(\xi) + \frac{\log (2d)}{3d}\right)^{\frac{1}{2}}.
 \end{align*}
In fact, letting $S_n$ be the Galois orbit of $\xi_1^n$, and $d_n = |S_n|$, we note that: (i) $d_n \geq d/n$ and (ii) $d = c_n\,d_n$ for some integer $c_n$. Moreover, every $\zeta \in S_n$ is repeated $c_n$ times in the set $\{\xi_j^n, j = 1,2, \ldots, d\}$. Then, applying \eqref{20241029_19:26} to $\zeta_1 = \xi_1^n$ we get 
\begin{align*}
 \left|\frac{1}{d}\sum_{j=1}^d e^{2\pi i n \theta_j}\right|  =  \left|\frac{1}{d_n}\sum_{\zeta \in S_n} \frac{\zeta}{|\zeta|}\right| \leq 2 \sqrt{6} \left(h(\xi_1^n) + \frac{\log (2d_n)}{3d_n}\right)^{\frac{1}{2}} \leq 2 \sqrt{6|n|} \left(h(\xi) + \frac{\log (2d)}{3d}\right)^{\frac{1}{2}},
 \end{align*}
as desired. 

\section{Proofs of Theorems \ref{Thm1} and \ref{Thm4}}

\subsection{Proof of Theorem \ref{Thm1}} We may assume that the constants in \eqref{20250116_14:25} and \eqref{20250116_14:26} are finite, otherwise the estimate is trivially true. 

\subsubsection{Setup} Recall that $S$ is the Galois orbit of $\bxi$. Applying the Fourier inversion formula \eqref{20241025_10:59} and Fubini's theorem, 
\begin{align*}
\E(F, \bxi) &  = \left| \sum_{\n \in \Z^N} \int_{\R^N} \widehat{F}(\n, \t)\,\frac{1}{|S|} \sum_{\balpha \in S} e^{2 \pi i \n \cdot \btheta(\balpha)} \, e^{2 \pi i \t \cdot \s(\balpha)}\,\d \t - \int_{\R^N} \widehat{F}(\0, \t)\,\d \t \right| \\ 
&  = |I_1 + I_2|\\
& \leq |I_1| + |I_2|,
\end{align*}
where
\begin{align*}
I_1:=  \sum_{\n \in \Z^N} \int_{\R^N} \widehat{F}(\n, \t)\,\frac{1}{|S|} \sum_{\balpha \in S} e^{2 \pi i \n \cdot \btheta(\balpha)}  \left(e^{2 \pi i \t \cdot \s(\balpha)}-1\right)\d \t \,,  
\end{align*}
and
\begin{align*}
I_2 := \sum_{\n \in \Z^N\setminus \{\0\}} \int_{\R^N} \widehat{F}(\n, \t)\,\frac{1}{|S|} \sum_{\balpha \in S} e^{2 \pi i \n \cdot \btheta(\balpha)} \d \t  = \sum_{\n \in \Z^N\setminus \{\0\}} \widehat{F_{\0}}(\n)\frac{1}{|S|} \sum_{\balpha \in S} e^{2 \pi i \n \cdot \btheta(\balpha)}.
\end{align*}
Above, for each $\balpha = (\alpha_1, \alpha_2, \ldots, \alpha_N) \in S$, we denote $\btheta(\balpha):= \big(\frac{{\rm arg}(\alpha_1)}{2\pi},  \frac{{\rm arg}(\alpha_2)}{2\pi}, \ldots, \frac{{\rm arg}(\alpha_N)}{2\pi}\big) \in \T^N$ and  $\s(\balpha) = (\log|\alpha_1|, \log|\alpha_2|, \ldots, \log|\alpha_N|) \in \R^N$. We now proceed by bounding $I_1$ and $I_2$ separately.

\subsubsection{Estimate for $I_1$} For generic $\t \in \R^N \setminus \{\0\}$ and $\n \in \Z^N$ we first consider upper bounds for 
\begin{align*}
\left|\frac{1}{|S|} \sum_{\balpha \in S} e^{2 \pi i \n \cdot \btheta(\balpha)}  \left(e^{2 \pi i \t \cdot \s(\balpha)}-1\right)\right|.
\end{align*}
This quantity is trivially bounded above by $2$. For $\delta >0$, let 
\begin{align}\label{20241025_12:09}
\Gamma_{\delta} = \left\{ \z = (z_1, \ldots, z_N) \in (\C^{\times})^N \ : \ \sum_{j=1}^N |\log|z_j|| \leq \delta\right\}.
\end{align}
 We write 
 \begin{align}\label{20241025_12:39}
  \left|\frac{1}{|S|} \sum_{\balpha \in S} e^{2 \pi i \n \cdot \btheta(\balpha)}  \left(e^{2 \pi i \t \cdot \s(\balpha)}-1\right)\right| \leq   \left|\frac{1}{|S|} \sum_{\balpha \in S \cap \Gamma_{\delta}} e^{2 \pi i \n \cdot \btheta(\balpha)}  \left(e^{2 \pi i \t \cdot \s(\balpha)}-1\right)\right| + \frac{2}{|S|}\sum_{\balpha \in S \setminus \Gamma_{\delta}} 1.
 \end{align}
From \eqref{20241025_12:09} and Lemma \ref{Lem11_App} in Appendix \ref{App_B},
 \begin{align}\label{20241025_12:40}
 \frac{2}{|S|}\sum_{\balpha \in S \setminus \Gamma_{\delta}} 1 \leq  \frac{2}{\delta} \left(\frac{1}{|S|}\sum_{\balpha \in S} \|\s(\balpha)\|_1\right)\leq \frac{4  h(\bxi)}{\delta}. 
 \end{align}
 On the other hand, note that 
 \begin{align}\label{20241025_12:41}
 \begin{split}
&  \left|\frac{1}{|S|} \sum_{\balpha \in S \cap \Gamma_{\delta}} e^{2 \pi i \n \cdot \btheta(\balpha)}  \left(e^{2 \pi i \t \cdot \s(\balpha)}-1\right)\right| \leq \frac{1}{|S|} \sum_{\balpha \in S \cap \Gamma_{\delta}}  \left|e^{2 \pi i \t \cdot \s(\balpha)}-1\right|\\
 &\qquad  \leq \frac{1}{|S|} \sum_{\balpha \in S \cap \Gamma_{\delta}} |2 \pi  \t \cdot \s(\balpha)| \leq \frac{2\pi \|\t\|_{\infty}}{|S|} \sum_{\balpha \in S \cap \Gamma_{\delta}} \|\s(\balpha)\|_1 \leq 2\pi \|\t\|_{\infty} \delta. 
 \end{split}
 \end{align}
 Combining \eqref{20241025_12:39}, \eqref{20241025_12:40} and \eqref{20241025_12:41} we get
 \begin{align}\label{20241025_12:50}
  \left|\frac{1}{|S|} \sum_{\balpha \in S} e^{2 \pi i \n \cdot \btheta(\balpha)}  \left(e^{2 \pi i \t \cdot \s(\balpha)}-1\right)\right| \leq  \frac{4  h(\bxi)}{\delta} + 2\pi \|\t\|_{\infty} \delta.
 \end{align}
 The optimal choice of $\delta = \big(2h(\bxi) /(\pi \|\t\|_{\infty})\big)^{1/2}$ in \eqref{20241025_12:50}, together with the trivial bound of $2$, yields 
 \begin{align}\label{20241025_13:33}
 \left|\frac{1}{|S|} \sum_{\balpha \in S} e^{2 \pi i \n \cdot \btheta(\balpha)}  \left(e^{2 \pi i \t \cdot \s(\balpha)}-1\right)\right| \leq \min\big\{2\sqrt{8 \pi  h(\bxi)\|\t\|_{\infty}}\,,\, 2\big\}.
 \end{align}

Using \eqref{20241025_13:33}, we now bound $I_1$ as follows:
 \begin{align}\label{20241025_13:51}
 \begin{split}
 |I_1| & \leq \sum_{\n \in \Z^N} \int_{\R^N} \left|\widehat{F}(\n, \t)\right| \left| \frac{1}{|S|} \sum_{\balpha \in S} e^{2 \pi i \n \cdot \btheta(\balpha)}  \left(e^{2 \pi i \t \cdot \s(\balpha)}-1\right)\right|\d \t\\
 & \leq 2\sqrt{8 \pi  h(\bxi)}\sum_{\n \in \Z^N} \int_{\|\t\|_{\infty} \leq M} \left| \widehat{F}(\n, \t)\right| \sqrt{\|\t\|_{\infty}}\, \d\t + 2 \sum_{\n \in \Z^N} \int_{\|\t\|_{\infty} > M} \left| \widehat{F}(\n, \t)\right| \d\t\,,
 \end{split}
 \end{align}
 where $M$ is a parameter that will be chosen later. Since $G(x)/\sqrt{x}$ is non-increasing, we have, for $\|\t\|_{\infty} \leq M$,
 \begin{align}\label{20241025_13:49}
  \sqrt{\|\t\|_{\infty}}\leq G( \|\t\|_{\infty}) \frac{\sqrt{M}}{G(M)}. 
 \end{align}
 Also, since $G$ is non-decreasing, we have, for $\|\t\|_{\infty} > M$,
  \begin{align}\label{20241025_13:50}
  1 \leq  \frac{G(\|\t\|_{\infty})}{G(M)}. 
 \end{align}
 Plugging \eqref{20241025_13:49} and \eqref{20241025_13:50} into \eqref{20241025_13:51}, we get
 \begin{align*}
 |I_1| \leq 2\sqrt{8 \pi  h(\bxi)}  \frac{\sqrt{M}}{G(M)} & \sum_{\n \in \Z^N} \int_{\|\t\|_{\infty} \leq M} \left| \widehat{F}(\n, \t)\right| G( \|\t\|_{\infty})\, \d\t \\
 & + \frac{2}{G(M)} \sum_{\n \in \Z^N} \int_{\|\t\|_{\infty} > M} \left| \widehat{F}(\n, \t)\right|G( \|\t\|_{\infty})\, \d\t.
 \end{align*}
 With the choice of $M = (8 \pi  h(\bxi))^{-1}$ we arrive at 
 \begin{align*}
  |I_1| \leq \frac{2}{G\big( (8 \pi  h(\bxi))^{-1}\big)} \sum_{\n \in \Z^N} \int_{\R^N} \left| \widehat{F}(\n, \t)\right|G( \|\t\|_{\infty})\, \d\t.
 \end{align*}
 
 \subsubsection{Estimate for $I_2$} \label{Est_I_2} For $\n \neq \0$, let $S_{\n} \subset \C^{\times}$ be the Galois orbit of $\chi^{\n}(\bxi) = \xi_1^{n_1}\xi_2^{n_2}\ldots \xi_N^{n_N}$. Then, by Lemma \ref{Lem10} in Appendix \ref{App_B}, there exists an integer $c_{\n}$ such that $|S| = c_{\n} \cdot |S_{\n}|$. Furthermore, every $\zeta \in S_{\n}$ is repeated $c_{\n}$ in the set of algebraic numbers $\{\chi^{\n}(\balpha)\ ; \ \balpha \in S\}$. Therefore, for $\n \neq \0$, we may apply Lemma \ref{New Lemma 1}  to get
 \begin{align}\label{20241025_16:27}
\left|\frac{1}{|S|} \sum_{\balpha \in S} e^{2 \pi i \n \cdot \btheta(\balpha)} \right|=  \left|\frac{1}{|S|} \sum_{\balpha \in S}  \frac{\chi^{\n}(\balpha)}{\big|\chi^{\n}(\balpha)\big|} \right|= \left| \frac{1}{|S_{\n}|} \sum_{\zeta \in S_{\n}}  \frac{\zeta}{|\zeta|}\right| \leq 2\sqrt{6}\left(h\big(\chi^{\n}(\bxi)\big) + \frac{\log (2|S_{\n}|)}{3|S_{\n}|}\right)^{\frac{1}{2}}.
 \end{align}
 Note that 
 \begin{align}\label{20241025_16:28}
 h\big(\chi^{\n}(\bxi)\big) = h(\xi_1^{n_1}\ldots \xi_N^{n_N}) \leq h(\xi_1^{n_1}) + \ldots + h(\xi_N^{n_N}) = |n_1|h(\xi_1) + \ldots + |n_N|h(\xi_N) \leq \|\n\|_{\infty} h(\bxi).  
 \end{align}
 Also, from the definition of $\mc{D}(\bxi)$ we have $|S_{\n}| \geq \mc{D}(\bxi) / \|\n\|_1$. Since $x \mapsto  \log(2x) / (3x)$ is decreasing for $x\geq 2$, one can check that
 \begin{align}\label{20241025_16:22}
 \frac{\log (2|S_{\n}|)}{3|S_{\n}|}  \leq  \|\n\|_1\frac{\log \big(2 \mc{D}(\bxi)\big)}{3 \mc{D}(\bxi)}
 \end{align}
(it is convenient to consider two cases: $1 \leq \|\n\|_1\leq  \mc{D}(\bxi)/2$ and $\|\n\|_1>  \mc{D}(\bxi)/2$ when showing \eqref{20241025_16:22}). From \eqref{20241025_16:27}, \eqref{20241025_16:28} and \eqref{20241025_16:22}, noting that $\|\n\|_{\infty} \leq \|\n\|_1$, we arrive at
\begin{align*}
\left|\frac{1}{|S|} \sum_{\balpha \in S} e^{2 \pi i \n \cdot \btheta(\balpha)} \right| \leq2 \sqrt{6\|\n\|_1}    \left(h(\bxi) + \frac{\log\big(2\mathcal{D}(\bxi)\big)}{3 \mathcal{D}(\bxi)}\right)^{1/2}.
\end{align*}
Noting the trivial bound of $1$, and the definition of $h_{\mathcal{D}} (\bxi)$ in \eqref{20241104_15:14}, we have just showed that 
 \begin{align}\label{20241025_16:37}
\left|\frac{1}{|S|} \sum_{\balpha \in S} e^{2 \pi i \n \cdot \btheta(\balpha)} \right| \leq  \min\big\{ 2\sqrt{6  \,h_{\mathcal{D}} (\bxi) \|\n\|_1} \, , \, 1\big\}.
\end{align}
 
Using \eqref{20241025_16:37} we bound $I_2$ as follows:
\begin{align}\label{20241025_16:41}
\begin{split}
|I_2| & \leq \sum_{\n \in \Z^N\setminus \{\0\}} \left| \widehat{F_{\0}}(\n)\right| \left|\frac{1}{|S|} \sum_{\balpha \in S} e^{2 \pi i \n \cdot \btheta(\balpha)}\right| \\
& \leq 2\sqrt{6 \, h_{\mathcal{D}} (\bxi)}  \sum_{0 < \|\n\|_1 \leq M} \left| \widehat{F_{\0}}(\n)\right|\sqrt{\|\n\|_1} + \sum_{\|\n\|_1 > M} \left| \widehat{F_{\0}}(\n)\right|\,,
\end{split}
\end{align}
where $M$ is a parameter that will be chosen later. Since $H(x)/\sqrt{x}$ is non-increasing, we have, for $\|\n\|_{1} \leq M$,
 \begin{align}\label{20241025_16:42}
 \sqrt{\|\n\|_1}  \leq H( \|\n\|_{1}) \frac{\sqrt{M}}{H(M)}.
 \end{align}
  Also, since $H$ is non-decreasing, we have, for $\|\n\|_{1} > M$,
  \begin{align}\label{20241025_16:43}
  1 \leq  \frac{H(\|\n\|_{1})}{H(M)}. 
 \end{align}
  Plugging \eqref{20241025_16:42} and \eqref{20241025_16:43} into \eqref{20241025_16:41}, we get
 \begin{align*}
|I_2| \leq 2\sqrt{6 \, h_{\mathcal{D}} (\bxi)}  \frac{\sqrt{M}}{H(M)} \sum_{0 < \|\n\|_1 \leq M} \left| \widehat{F_{\0}}(\n)\right|H( \|\n\|_{1}) + \frac{1}{H(M)} \sum_{\|\n\|_1 > M} \left| \widehat{F_{\0}}(\n)\right|H( \|\n\|_{1}). 
 \end{align*}
 With the choice of $M = (24 \,h_{\mathcal{D}} (\bxi))^{-1}$ we arrive at 
 \begin{align*}
 |I_2| \leq \frac{1}{H \big( (24 \,h_{\mathcal{D}} (\bxi))^{-1} \big)}\sum_{\n \in \Z^N\setminus \{\0\}} \left| \widehat{F_{\0}}(\n)\right|H( \|\n\|_{1}).
 \end{align*}
 This concludes the proof of Theorem \ref{Thm1}.

 \subsection{Proof of Theorem \ref{Thm4}} This requires only a few modifications in comparison to the proof of Theorem \ref{Thm1}. The setup is the same and the estimate for $I_1$ is the same up to equation \eqref{20241025_13:51}. In that equation we now choose $M = W\big(h_{\mc{D}}(\bxi)^{-1}\big)$. This leads us to 
  \begin{align} \label{20241026_18:22}
 \begin{split}
 |I_1| & \leq 2\sqrt{8 \pi  h(\bxi)}\sum_{\n \in \Z^N} \int_{\|\t\|_{\infty} \leq M} \left| \widehat{F}(\n, \t)\right| \sqrt{\|\t\|_{\infty}}\, \d\t + 2 \sum_{\n \in \Z^N} \int_{\|\t\|_{\infty} > M} \left| \widehat{F}(\n, \t)\right| \d\t \\
 & \leq 2\sqrt{8 \pi} \sqrt{h(\bxi) \, W\big(h_{\mc{D}}(\bxi)^{-1}\big)}\,  \big\|\widehat{F}\big\|_{L^1(\Z^N \times \R^N)} + 2 \nu_{\widehat{F}} \Big(W\big(h_{\mc{D}}(\bxi)^{-1}\big)\Big)\\
 & \leq 2\sqrt{8 \pi} \sqrt{h_{\mc{D}}(\bxi) \, W\big(h_{\mc{D}}(\bxi)^{-1}\big)}\,  \big\|\widehat{F}\big\|_{L^1(\Z^N \times \R^N)} + 2 \nu_{\widehat{F}} \Big(W\big(h_{\mc{D}}(\bxi)^{-1}\big)\Big).
 \end{split}
 \end{align}
 Similarly, the estimate for $I_2$ is the same up to \eqref{20241025_16:41}, and we now choose $M = W\big(h_{\mc{D}}(\bxi)^{-1}\big)$ in that equation to get
 \begin{align}\label{20241026_18:23}
\begin{split}
|I_2| &  \leq 2\sqrt{6 \, h_{\mathcal{D}} (\bxi)}  \sum_{0 < \|\n\|_1 \leq M} \left| \widehat{F_{\0}}(\n)\right|\sqrt{\|\n\|_1} + \sum_{\|\n\|_1 > M} \left| \widehat{F_{\0}}(\n)\right| \\
& \leq 2\sqrt{6} \sqrt{h_{\mc{D}}(\bxi) \, W\big(h_{\mc{D}}(\bxi)^{-1}\big)}\, \big\|\widehat{F}\big\|_{L^1(\Z^N \times \R^N)} + \nu_{\widehat{F}}  \Big(W\big(h_{\mc{D}}(\bxi)^{-1}\big)\Big).
\end{split}
\end{align}
 Combining \eqref{20241026_18:22} and \eqref{20241026_18:23} we arrive at the proposed result.
 
 \section{Proofs of Theorems \ref{Thm5} and \ref{Thm7}} 
 
  \subsection{Proof of Theorem \ref{Thm5}} We may assume that the constant in \eqref{20250116_14:27} is finite, otherwise the estimate is trivially true. In this case, observe that $\widehat{F_{\0}} \in L^1(\Z^N)$ and Fourier inversion for $F_{\0}$ holds.
 
 \subsubsection{Setup} Recall that $S$ is the Galois orbit of $\bxi$. We add and subtract a suitable term to get
\begin{align*}
\E(F, \bxi)  &= \left| \frac{1}{|S|} \sum_{\balpha \in S} F\big(\btheta(\balpha), \s(\balpha)\big) -  \frac{1}{|S|} \sum_{\balpha \in S} F\big(\btheta(\balpha), \0\big) + \frac{1}{|S|} \sum_{\balpha \in S} F(\btheta(\balpha), \0) - \int_{\T^N} F\big(\btheta, \0\big)  \,\d \btheta \right| \\
&= | I_1 + I_2|\\
& \leq |I_1| + |I_2| \,,
\end{align*}
where
\begin{align*}
I_1 =  \frac{1}{|S|} \sum_{\balpha \in S} F\big(\btheta(\balpha), \s(\balpha)\big) -  \frac{1}{|S|} \sum_{\balpha \in S} F\big(\btheta(\balpha), \0\big)\,,
\end{align*}
and
\begin{align*}
I_2 = \frac{1}{|S|} \sum_{\balpha \in S} F(\btheta(\balpha), \0) - \int_{\T^N} F\big(\btheta, \0\big)  \,\d \btheta = \sum_{\n \in \Z^N\setminus \{\0\}} \widehat{F_{\0}}(\n)\frac{1}{|S|} \sum_{\balpha \in S} e^{2 \pi i \n \cdot \btheta(\balpha)}.
\end{align*}
Note the use of Fourier inversion for $F_{\0}(\btheta)= F(\btheta, \0)$  in the last passage above.

\subsubsection{Estimate for $I_1$}  Using the triangle inequality and \eqref{20241028_10:36}, the fact that $\omega$ is non-decreasing and concave (for Jensen's inequality below), and Lemma \ref{Lem11_App} in Appendix \ref{App_B}, we get 
\begin{align*}
|I_1|  \leq \frac{1}{|S|} \sum_{\balpha \in S} \omega\big( | \s(\balpha)|\big) \leq \frac{1}{|S|} \sum_{\balpha \in S} \omega\big( \| \s(\balpha)\|_1\big) \leq \omega\left( \frac{1}{|S|} \sum_{\balpha \in S}\|\s(\balpha)\|_1\right) \leq \omega\big(2 h (\bxi)\big). 
\end{align*}

\subsubsection{Estimate for $I_2$} Note that the quantity $I_2$ is the same as in the proof of Theorem \ref{Thm1}, and the desired estimate is done in \S \ref{Est_I_2}.

  \subsection{Proof of Theorem \ref{Thm7}} The setup and the estimate for $I_1$ are the same as in the proof of Theorem \ref{Thm5}. The estimate for $I_2$ is the same as \eqref{20241026_18:23}.
  
 \section{Sharpness of Corollaries \ref{Cor3} and \ref{Cor6}}\label{Qual_Sharp}
  
In this section we show that the estimates in Corollaries \ref{Cor3} and \ref{Cor6} are quantitatively sharp by presenting explicit examples.  

\subsection{Sharpness of Corollary \ref{Cor3}} \label{Sec_5.1} Consider a function $F: \T^N \times \R^N \to \C$ such that $\widehat{F}(\n,\t) = 0$ for $\n \neq \0$ and $\t \in \R^N$ and 
$$\widehat{F}(\0,\t) = \frac{1}{(1 + |\t|^2)^{(N + \gamma)/2} \ \log (20 + |\t|^2)\ (\log \log (20 + |\t|^2))^2}$$
for $\t \in \R^N$. Note that $\widehat{F} \in L^1(\Z^N \times \R^N)$ and \eqref{20241026_15:38} is finite for such $F$. Note also that all partial derivatives of $\t \mapsto \widehat{F}(\0,\t)$ belong to $L^1(\R^N)$ and vanish at infinity. Letting $\s = (s_1, \ldots, s_N)$ as usual, from integration by parts we get 
$$F(\btheta, \s) := \int_{\R^N} \widehat{F}(\0,\t)\,e^{2\pi i \t \cdot \s} \d \t = \frac{1}{(2\pi i s_j)^{N+1}} \int_{\R^N} \left(\frac{\partial^{N+1}}{\partial t_j^{N+1}}\widehat{F}(\0,\t)\right)\,e^{2\pi i \t \cdot \s} \d \t\,,$$
whenever $s_j \neq 0$. This plainly implies that $|\s|^{N+1} |F(\btheta, \s)|$ is bounded, and hence $F \in L^p( \T^N \times \R^N)$ for all $1 \leq p \leq \infty$. Note that $F$ is continuous by the Riemann-Lebesgue lemma, and therefore $F \in \mc{A}$. 

\smallskip

Let $k_0 = k_0(N)$ be a large natural number. For $k \geq k_0$, consider the polynomials $P_{k,j}(x) = x^{d_{k,j}} - d_{k,j}$ for $j=1,2,\ldots, N$, where $d_{k,j}$'s are prime numbers with $2^k < d_{k,1} < d_{k,2} < \ldots < d_{k,N} < 2^{k+1}$. Note that such polynomials are irreducible by Eisenstein's criterion. Let $\xi_{k, j}$ be a root of $P_{k,j}$, and set $\bxi_k = (\xi_{k,1}, \xi_{k,2}, \ldots, \xi_{k,N})$. Let $M_{k,j} := d_{k,j} / (\log d_{k,j})$ and $M_k := 2^k/ (\log 2^k)$. Note that we have $M_{k,j} \simeq M_k$ and $h_{\mc{D}}(\bxi) \simeq_N M_k^{-1}$. By Fourier inversion,
\begin{align}\label{20241105_15:07}
\begin{split}
 \E(F, \bxi_k)  &=  \left| \int_{\R^N} \widehat{F}(\0,\t) \left( e^{2\pi i \big(\frac{t_1}{M_{k,1}} + \ldots + \frac{t_N}{M_{k,N}}\big)} - 1\right)\d \t\right|\\
& = 2 \int_{\R^N} \widehat{F}(\0,\t) \sin^2\!\Big( \pi  \big(\tfrac{t_1}{M_{k,1}} + \ldots + \tfrac{t_N}{M_{k,N}}\big) \Big)\d \t  \\
& \geq 2 \int_{(\R^+)^N} \widehat{F}(\0,\t) \sin^2\!\Big( \pi  \big(\tfrac{t_1}{M_{k,1}} + \ldots + \tfrac{t_N}{M_{k,N}}\big) \Big)\d \t.
\end{split}
\end{align}
Recall that $\sin (\pi x/M_k) \simeq \pi x/M_k$ for $x \in [0,M_k/2]$. Hence, 
\begin{align}\label{20241105_15:08}
  \int_{(\R^+)^N} &  \widehat{F}(\0,\t)   \sin^2\!\Big( \pi  \big(\tfrac{t_1}{M_{k,1}} + \ldots + \tfrac{t_N}{M_{k,N}}\big) \Big)\d \t  \nonumber  \geq \int_{\substack{\t \in (\R^+)^N \\ \|\t\|_1 \leq M_k/2}} \widehat{F}(\0,\t) \sin^2\!\Big( \pi  \big(\tfrac{t_1}{M_{k,1}} + \ldots + \tfrac{t_N}{M_{k,N}}\big) \Big)\d \t \nonumber \\
& \gtrsim \int_{\substack{\t \in (\R^+)^N \\ \|\t\|_1 \leq M_k/2}}\frac{\Big( \pi  \big(\tfrac{t_1}{M_{k,1}} + \ldots + \tfrac{t_N}{M_{k,N}}\big) \Big)^2}{(1 + |\t|^2)^{(N + \gamma)/2} \ \log (20 + |\t|^2) \ (\log \log (20 + |\t|^2))^2}\d \t  \nonumber \\
&  \gtrsim  \frac{1 }{M_k^2\, (\log M_k)\, (\log \log M_k)^2 }\int_{\substack{\t \in (\R^+)^N \\ \|\t\|_1 \leq M_k/2}}\frac{\|\t\|_1^2}{(1 + |\t|^2)^{(N + \gamma)/2} } \,\d \t \nonumber \\
& \gtrsim_N  \frac{1 }{M_k^2 \,(\log M_k)\, (\log \log M_k)^2 }\int_{\substack{\t \in (\R^+)^N \\ |\t| \leq M_k/2}}\frac{|\t|^2}{(1 + |\t|^2)^{(N + \gamma)/2} } \,\d \t  \\
& \simeq_N  \frac{1 }{M_k^2 \,(\log M_k)\, (\log \log M_k)^2 }\int_{ |\t| \leq M_k/2}\frac{|\t|^2}{(1 + |\t|^2)^{(N + \gamma)/2} } \,\d \t \nonumber \\
& \gtrsim_N  \frac{1 }{M_k^2 \, (\log M_k)\, (\log \log M_k)^2 } \int_1^{M_k/2} r^{1 - \gamma }\,\d r  \gtrsim \frac{1 }{M_k^{\gamma} \,(\log M_k)\, (\log \log M_k)^2 }. \nonumber
\end{align}
Note the change to polar coordinates in the second to last inequality in \eqref{20241105_15:08}. From \eqref{20241105_15:07} and \eqref{20241105_15:08} we arrive at 
\begin{align*}
 \E(F, \bxi_k) \gtrsim_N \frac{h_{\mathcal{D}}(\bxi_k)^{\gamma}}{|\log h_{\mathcal{D}}(\bxi_k)|\, \big(\log |\log h_{\mathcal{D}}(\bxi_k)|\big)^2 }.
\end{align*}
We may now simply renormalize the function $F$, multiplying it by an appropriate constant depending on $N$, to arrive at the formulation proposed in \eqref{20250227_14:39}.

\subsection{Sharpness of Corollary \ref{Cor6}} Let $F: \T^N \times \R^N \to \C$ be given by $F(\btheta, \s) = |\s|^{\gamma}$. In this case $F_{\0}(\btheta) =F(\btheta, \0)= 0$ for any $\btheta \in \T^N$. With $\bxi_k$ and $M_k$ as in \S \ref{Sec_5.1}, a direct computation yields
\begin{align*}
 \E(F, \bxi_k)\simeq_N M_k^{-\gamma} \simeq_N h_{\mathcal{D}}(\bxi_k)^{\gamma}.
\end{align*} 
Again, we may simply renormalize the function $F$, multiplying it by an appropriate constant depending on $N$, to arrive at the formulation proposed in \eqref{20250227_14:41}.

\appendix
\numberwithin{equation}{section}

\section{Bounds for the multidimensional angular discrepancy} \label{App_A}
Let $\bxi \in \big(\overline{\mathbb{Q}}^{\times}\big)^N$ and let $S$ be its Galois orbit. For each $\balpha = (\alpha_1, \alpha_2, \ldots, \alpha_N) \in S$, we denote $\btheta(\balpha):= \big(\frac{{\rm arg}(\alpha_1)}{2\pi},  \frac{{\rm arg}(\alpha_2)}{2\pi}, \ldots, \frac{{\rm arg}(\alpha_N)}{2\pi}\big) \in \T^N$. We define the multidimensional angular discrepancy of the Galois orbit of $\bxi$ as 
\begin{equation*}
\Delta(\bxi) := \sup_{I \subset \T^N} \left| \frac{1}{|S|} \sum_{\balpha \in S} \chi_{I}\big(\btheta(\balpha)\big) - |I|\right|\,,
\end{equation*}
where the supremum is taken over all subsets $I$ of $\T^N$ that are products of intervals, i.e., $I = \prod_{j=1}^N I_j$ where each $I_j \subset \T$ is an interval, and $\chi_{I}$ denotes the characteristic function of the set $I$. Moreover, $|I|$ denotes the volume of $I$, i.e., $|I|:=  \prod_{j=1}^N |I_j|$, where $|I_j|$ is the usual measure of the interval $I_j\subset \T$.

\smallskip

Since the function $\chi_I$ is not continuous, our main results in the Introduction do not directly apply. Nevertheless, we can still apply the methods developed here to arrive at the following upper bound. 

\begin{theorem}\label{Ang_Disc}
Let $\bxi \in \big(\overline{\mathbb{Q}}^{\times}\big)^N$. Then, for $h_{\mc{D}}(\bxi) \leq e^{-1}$, we have
\begin{equation}\label{20241120_11:36}
\Delta(\bxi) \leq \Big( 9 \left(\tfrac{3}{2}\right)^N + 14N\Big)  \,h_{\mc{D}}(\bxi)^{1/3}\,\big| \log h_{\mc{D}}(\bxi)\big|^{2(N-1)/3}.
\end{equation}
\end{theorem}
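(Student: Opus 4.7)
The plan is to reduce the discrepancy for the non-smooth test function $\chi_I$ to the smooth Fourier-analytic setting by sandwiching $\chi_I$ between trigonometric polynomial majorants and minorants of bounded degree, and then applying the exponential sum estimate from the remark following Lemma \ref{New Lemma 1}.

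First, writing $I=\prod_{j=1}^N I_j$ and fixing a parameter $M$ to be chosen later, I would invoke the one-dimensional Beurling--Selberg / Vaaler construction to produce, for each $j$, trigonometric polynomials $f_j^\pm$ of degree at most $M$ satisfying $0\le f_j^-\le \chi_{I_j}\le f_j^+\le 1$, $\int_{\T}(f_j^+-f_j^-)\le c_1/M$, and the decay $|\widehat{f_j^\pm}(n)|\le c_2/|n|$ for $n\neq 0$. Setting $F^\pm(\btheta):=\prod_j f_j^\pm(\theta_j)$ gives $F^-\le \chi_I\le F^+$, and a telescoping identity
$$\prod_j f_j^+-\prod_j f_j^- = \sum_{k=1}^N\Bigl(\prod_{j<k}f_j^-\Bigr)(f_k^+-f_k^-)\Bigl(\prod_{j>k}f_j^+\Bigr),$$
together with the coordinate-separation of integrals, gives $\int_{\T^N}(F^+-F^-) \lesssim N/M$. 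The sandwich inequality then produces the splitting
$$\Delta(\bxi) \le \int_{\T^N}(F^+-F^-) + \max\bigl(\E(F^+,\bxi),\ \E(F^-,\bxi)\bigr),$$
where $\E(F^\pm,\bxi)$ is computed directly on $\T^N$ via the Fourier inversion formula, since $F^\pm$ is independent of $\s$.

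For the remaining term, the Fourier expansion on $\T^N$ together with the remark following Lemma \ref{New Lemma 1} gives
$$\E(F^\pm,\bxi) \le 2\sqrt{6\,h_{\mc D}(\bxi)}\sum_{\0\neq \n}\bigl|\widehat{F^\pm}(\n)\bigr|\sqrt{\|\n\|_1}.$$
The key estimate is to control the right-hand side by using the product structure $\widehat{F^\pm}(\n)=\prod_j \widehat{f_j^\pm}(n_j)$ and the subadditivity $\sqrt{\|\n\|_1}\le \sum_j\sqrt{|n_j|}$. Combined with the one-dimensional coefficient bounds $\sum_{1\le|n|\le M}|\widehat{f_j^\pm}(n)|\lesssim \log M$ and $\sum_{|n|\le M}|\widehat{f_j^\pm}(n)|\sqrt{|n|}\lesssim \sqrt{M}$ (both arising from the $1/|n|$ decay of the Vaaler coefficients), this yields
$$\E(F^\pm,\bxi)\lesssim N\,\sqrt{M\,h_{\mc D}(\bxi)}\,(\log M)^{N-1}.$$

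Putting the pieces together, I obtain $\Delta(\bxi)\lesssim N/M + N\sqrt{M\,h_{\mc D}(\bxi)}\,(\log M)^{N-1}$, and balancing the two terms at $M\asymp h_{\mc D}(\bxi)^{-1/3}|\log h_{\mc D}(\bxi)|^{-2(N-1)/3}$ reproduces the qualitative bound $h_{\mc D}(\bxi)^{1/3}|\log h_{\mc D}(\bxi)|^{2(N-1)/3}$; the hypothesis $h_{\mc D}(\bxi)\le e^{-1}$ ensures that $\log|h_{\mc D}(\bxi)|$ is positive and $M\ge 1$. The main obstacle is the careful tracking of implied constants through the multidimensional telescoping and through the sharp one-dim Fourier-coefficient bounds for the Vaaler polynomials in order to recover the explicit prefactor $9(3/2)^N + 14N$: the $(3/2)^N$ factor emerges from the telescoping, where each majorant satisfies $\int f_j^+\le 1+c/M\le 3/2$ (forcing the choice of $M$ large enough), while the $14N$ factor reflects the $N$-term telescoping combined with the exact constants from the Vaaler construction. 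A minor auxiliary point is verifying nonnegativity of the one-dimensional minorants $f_j^-$ (needed for the product sandwich to preserve direction of inequalities), which the standard Vaaler approach provides directly.
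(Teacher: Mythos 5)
Your overall skeleton matches the paper's: reduce $\Delta(\bxi)$ to the exponential sums $\frac{1}{|S|}\sum_{\balpha\in S}e^{2\pi i \n\cdot\btheta(\balpha)}$ over $0<\|\n\|_\infty\le M$, bound these by $2\sqrt{6\,h_{\mc D}(\bxi)\,\|\n\|_1}$ via the estimate \eqref{20241025_16:37} (equivalently the remark after Lemma \ref{New Lemma 1}), use $\sqrt{\|\n\|_1}\le\sum_j\sqrt{|n_j|}$ and coordinate separation to get $N\sqrt{M}(\log M)^{N-1}$, and balance with the $1/M$ term by the same choice of $M$. The difference is the first step: the paper simply invokes the multidimensional Erd\H{o}s--Tur\'an--Koksma inequality \cite[Theorem 1.21]{DT}, which delivers the weights $1/r(\n)$ and the explicit factor $(3/2)^N$ as a black box, whereas you rederive a version of it from scratch with products of one-dimensional Selberg/Vaaler majorants and minorants.

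That rederivation is where the gap lies. The product sandwich $\prod_j f_j^-\le\chi_I\le\prod_j f_j^+$ requires $f_j^-\ge 0$, and the Selberg minorant is \emph{not} nonnegative in general: since $\int_{\T}f_j^-=|I_j|-1/(M+1)$, the minorant must be negative somewhere whenever $|I_j|<1/(M+1)$, and the supremum defining $\Delta(\bxi)$ ranges over all boxes, including arbitrarily short ones. If two factors $f_j^-$ are simultaneously negative, their product can exceed $\chi_{I_1}\chi_{I_2}=0$, so $F^-$ need not minorize $\chi_I$, and your "minor auxiliary point" is actually the known obstruction to the naive product construction for multidimensional ETK (it is fixable, e.g.\ by a corrected telescoping that interleaves $\chi_{I_j}$ with the majorants, but that must be done explicitly). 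A second, lesser issue is that the theorem asserts the explicit constant $9(3/2)^N+14N$, which your route does not produce: the $(3/2)^N$ comes from the quoted ETK inequality, not from a telescoping of majorants, and you leave the constant tracking as an unresolved "obstacle." As written, your argument yields at best $\Delta(\bxi)\lesssim_N h_{\mc D}(\bxi)^{1/3}|\log h_{\mc D}(\bxi)|^{2(N-1)/3}$, conditional on repairing the minorant step; citing \cite[Theorem 1.21]{DT} as the paper does removes both difficulties at once.
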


This is naturally connected to the classical Erd\"{o}s-Tur\'an inequality \cite{ET} for the angular discrepancy of zeros of polynomials; see also \cite{AM, C_etal_2021, G, M, SW, S}. When $N=1$, Theorem \ref{Ang_Disc} recovers a well-known upper bound that dates back to the works of Langevin \cite{La} and Mignotte \cite{Mi}. This result has been recently sharpened by Baker and Masser \cite[Theorem 1.10]{BM}, who were able to remove the dependence on the degree on the upper bound, i.e., replacing $h_{\mc{D}}^{1/3}$ by simply $h^{1/3}$ on the right-hand side of \eqref{20241120_11:36} when $\xi$ is not a root of unity. In the multidimensional case, it is an interesting problem whether or not one can remove the logarithmic term and the dependence on the generalized degree on the right-hand side of \eqref{20241120_11:36}. A result of similar flavour to Theorem \ref{Ang_Disc} was obtained by D'Andrea, Galligo and Sombra in \cite[Theorem 1.4]{DNS14} in the context of angular discrepancy for solutions of systems of sparse polynomial equations.

\begin{proof}
We make use of the multidimensional Erd\"os-Tur\'an-Koksma inequality \cite[Theorem 1.21]{DT}. It states that, for any non-negative integer $M$, we have
\begin{align}\label{20241120_11:09}
\Delta(\bxi) \leq \bigg(\frac{3}{2}\bigg)^N \left(\frac{2}{M+1} + \sum_{0 < \|{\n}\|_{\infty} \leq M} \frac{1}{r(\n)} \left|\frac{1}{|S|} \sum_{\balpha \in S} e^{2 \pi i \n \cdot \btheta(\balpha)}  \right|  \right),
\end{align}
where $r(\n):= \prod_{i=1}^N\max\{1, |n_i|\}$, for $\n = (n_1, n_2, \ldots, n_N) \in \Z^N$. When $M=0$ we regard the sum on the right-hand side of  \eqref{20241120_11:09} as empty. Using the bound \eqref{20241025_16:37} we get
\begin{align}\label{20241120_12:18}
\Delta(\bxi) \leq \bigg(\frac{3}{2}\bigg)^N \left(\frac{2}{M+1} + 2\sqrt{6\,  h_{\mathcal{D}} (\bxi) } \sum_{0 < \|{\n}\|_{\infty} \leq M} \frac{\sqrt{\|\n\|_1}}{r(\n)}  \right).
\end{align}
Using the bound $\sqrt{\|\n\|_1} \leq \sum_{j=1}^N \sqrt{|n_j|}$ and the symmetries in the sum, we have 
\begin{align}\label{20241120_12:08}
\begin{split}
&\sum_{0 < \|{\n}\|_{\infty} \leq M} \frac{\sqrt{\|\n\|_1}}{r(\n)}  \leq  \sum_{0 < \|{\n}\|_{\infty} \leq M} \frac{\sqrt{|n_1|} + \ldots + \sqrt{|n_N|}}{r(\n)}  = N \sum_{0 < \|{\n}\|_{\infty} \leq M} \frac{\sqrt{|n_1|}}{r(\n)} \\
& = N \sum_{0 \leq \|{\n}\|_{\infty} \leq M} \frac{\sqrt{|n_1|}}{r(\n)} = N \left(\sum_{0 \leq |n_1| \leq M} \frac{\sqrt{|n_1|}}{\max\{1, |n_1|\}}\right)\left(\sum_{0 \leq |k| \leq M} \frac{1}{\max\{1, |k|\}}\right)^{N-1}\\
& \leq N \,\big(4 \sqrt{M}\big) \big(3 + 2\log M\big)^{N-1}\,,
\end{split}
\end{align}
for $M \geq 1$ (if $M=0$ the penultimate line in \eqref{20241120_12:08} is simply zero, and we understand the upper bound in the last line as zero). Combining \eqref{20241120_12:18} and \eqref{20241120_12:08} we arrive at
\begin{align}\label{20241120_13:57}
\Delta(\bxi) \leq 2 \bigg(\frac{3}{2}\bigg)^N \left( \frac{1}{M+1} + \sqrt{c\, h_{\mathcal{D}} (\bxi)} \sqrt{M}\,\big(\tfrac{3}{2} + \log M\big)^{N-1} \right)\,,
\end{align}
where $c = 6 \, N^2 \, 2^{2N+2}$. 

\smallskip

The idea now is to choose 
$$M := \Big\lfloor e^{-3/2}  \,h_{\mathcal{D}} (\bxi)^{-1/3} \,\big| \log h_{\mathcal{D}} (\bxi)\big|^{-2(N-1)/3}\Big \rfloor.$$
One then immediately has
\begin{equation}\label{20241120_13:56}
\frac{1}{M+1} \leq e^{3/2} \,h_{\mathcal{D}} (\bxi)^{1/3} \,\big| \log h_{\mathcal{D}} (\bxi)\big|^{2(N-1)/3}.
\end{equation}
Since $h_{\mc{D}}(\bxi) \leq e^{-1}$, we have $\big| \log h_{\mathcal{D}} (\bxi)\big| \geq 1$ and hence
\begin{align}\label{20241120_13:39}
M \leq  \Big\lfloor e^{-3/2} \, h_{\mathcal{D}} (\bxi)^{-1/3} \Big \rfloor \leq e^{-3/2} \, h_{\mathcal{D}} (\bxi)^{-1/3}.
\end{align}
Therefore, if $M \geq 1$, we may use \eqref{20241120_13:39} to get
\begin{align*}
\big(\tfrac{3}{2} + \log M\big)^{N-1} & \leq \left(\tfrac{3}{2} + \log \big(e^{-3/2} \, h_{\mathcal{D}} (\bxi)^{-1/3}\big)\right)^{N-1} = \left( \tfrac{1}{3} \big| \log h_{\mathcal{D}} (\bxi)\big| \right)^{N-1}\,,
\end{align*}
which yields
\begin{align}\label{20241120_13:55}
\sqrt{c\, h_{\mathcal{D}} (\bxi)} \sqrt{M}\,\big(\tfrac{3}{2} + \log M\big)^{N-1} \leq  \left(\tfrac{1}{3}\right)^{N-1}  e^{-3/4}\, c^{1/2}\, h_{\mathcal{D}} (\bxi)^{1/3} \,\big| \log h_{\mathcal{D}} (\bxi)\big|^{2(N-1)/3}.
\end{align}
Combining \eqref{20241120_13:57}, \eqref{20241120_13:56} and \eqref{20241120_13:55} we arrive at 
\begin{align}\label{20241120_14:03}
\Delta(\bxi) \leq 2 \left(\frac{3}{2}\right)^N \left(e^{3/2} + \left(\tfrac{1}{3}\right)^{N-1}  e^{-3/4}\, c^{1/2}\right) h_{\mc{D}}(\bxi)^{1/3}\,\big| \log h_{\mc{D}}(\bxi)\big|^{2(N-1)/3}
\end{align}
(note that when $M=0$ we do not have the rightmost term in \eqref{20241120_13:57}, and \eqref{20241120_14:03} continues to hold). Recalling that $c = 6 \, N^2 \, 2^{2N+2}$, this simplifies to
\begin{align*}
\Delta(\bxi) & \leq  \left(2 e^{3/2}\left(\tfrac{3}{2}\right)^N +  12\sqrt{6} \,e^{-3/4}\,N\right) h_{\mc{D}}(\bxi)^{1/3}\,\big| \log h_{\mc{D}}(\bxi)\big|^{2(N-1)/3}.
\end{align*}
Noting that $2 e^{3/2} < 9$ and $12\sqrt{6} \,e^{-3/4} < 14$, we arrive at \eqref{20241120_11:36}. 
\end{proof}

\section{Some auxiliary lemmas} \label{App_B}
In this appendix we collect some auxiliary lemmas from the work of D'Andrea, Narv\'aez-Clauss and Sombra \cite{DNS}. For the convenience of the reader, we briefly reproduce the proofs. 

\begin{lemma}[cf. Lemma 2.3 of \cite{DNS}] \label{Lem10}Let $\bxi = (\xi_1, \ldots, \xi_N) \in \big(\overline{\mathbb{Q}}^{\times}\big)^N$ and $S$ be its Galois orbit. Then 
\begin{itemize}
\item[(i)] $|S| = [\Q(\xi_1, \ldots, \xi_N) \, : \, \Q]$.
\item[(ii)] ${\rm deg}(\chi^{\n}(\bxi))$ divides $|S|$ for every $\n \in \Z^N$.
\end{itemize}
\end{lemma}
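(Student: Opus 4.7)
The plan is to deduce both parts from the orbit-stabilizer theorem together with standard Galois-theoretic facts about the absolute Galois group $G := {\rm Gal}(\overline{\Q}/\Q)$.

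For part (i), I would first identify the stabilizer of $\bxi$ under the coordinatewise action of $G$ on $\big(\overline{\mathbb{Q}}^{\times}\big)^N$. An element $\sigma \in G$ fixes $\bxi$ if and only if $\sigma \xi_j = \xi_j$ for every $j$, which by elementary field theory is equivalent to $\sigma$ fixing the composite $K := \Q(\xi_1, \ldots, \xi_N)$ pointwise; that is, the stabilizer is precisely $H := {\rm Gal}(\overline{\Q}/K)$. The orbit-stabilizer theorem then yields $|S| = [G : H]$, and the standard correspondence between left cosets of $H$ in $G$ and $\Q$-embeddings $K \hookrightarrow \overline{\Q}$ identifies $[G : H]$ with the number of such embeddings, which equals $[K : \Q]$.

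For part (ii), I would set $\eta := \chi^{\n}(\bxi) = \xi_1^{n_1} \cdots \xi_N^{n_N}$. Since each $\xi_j$ lies in $K$ and $K$ is a field, we have $\eta \in K$, hence $\Q(\eta) \subseteq K$. The tower law gives
\[
[K : \Q] = [K : \Q(\eta)] \cdot [\Q(\eta) : \Q],
\]
so $\deg\big(\chi^{\n}(\bxi)\big) = [\Q(\eta) : \Q]$ divides $[K : \Q] = |S|$ by part (i).

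There is essentially no serious obstacle here: once the stabilizer is correctly identified as ${\rm Gal}(\overline{\Q}/K)$, the rest follows from orbit-stabilizer and the multiplicativity of degrees in field towers. The only step that requires a moment of care is the identification $[G : H] = [K : \Q]$, since $G$ and $H$ are both infinite (profinite) groups; one must argue via the bijection with embeddings rather than by naive cardinality, noting that every $\Q$-embedding $K \hookrightarrow \overline{\Q}$ extends (non-canonically) to an automorphism of $\overline{\Q}$, while restriction to $K$ is constant on cosets of $H$.
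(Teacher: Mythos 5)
Your proof is correct and follows essentially the same route as the paper: identify the stabilizer of $\bxi$ as ${\rm Gal}\big(\overline{\Q}/\Q(\xi_1,\ldots,\xi_N)\big)$, apply orbit--stabilizer together with the Galois correspondence for (i), and deduce (ii) from $\chi^{\n}(\bxi)\in\Q(\xi_1,\ldots,\xi_N)$ via multiplicativity of degrees. Your extra remark about handling the infinite index $[G:H]$ through the bijection with $\Q$-embeddings is a welcome point of care that the paper leaves implicit.
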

\begin{proof}
Note that ${\rm Gal}\big(\overline{\Q}/\Q\big)$ acts transitively on $S$ and that ${\rm Gal}\big(\overline{\Q}/\Q(\xi_1, \ldots, \xi_N)\big)$ is the stabilizer of $\bxi = (\xi_1, \ldots, \xi_N)$ under this action. Hence $|S| = \big[{\rm Gal}\big(\overline{\Q}/\Q\big) \, :\, {\rm Gal}\big(\overline{\Q}/\Q(\xi_1, \ldots, \xi_N)\big)\big]$ which, by Galois theory, leads to (i). Part (ii) then follows since $\chi^{\n}(\bxi) \in \Q(\xi_1, \ldots, \xi_N)$. 
\end{proof}

\begin{lemma}[cf. Lemmas 2.4 and 2.5 of \cite{DNS}] \label{Lem11_App}
 Let $\bxi \in \big(\overline{\mathbb{Q}}^{\times}\big)^N$ and consider its Galois orbit $S = \{\bxi_1, \bxi_2, \ldots, \bxi_D\}$, where $D = |S|$ and $\bxi = \bxi_1$. Write $\bxi_j = (\xi_{j,1}, \ldots, \xi_{j,N})$ for $j = 1, 2, \ldots, D$. Then 
\begin{align*}
\frac{1}{D} \sum_{j=1}^D  \sum_{\ell = 1}^N\big|\log |\xi_{j, \ell}|\big| \leq 2 h(\bxi). 
\end{align*}
\end{lemma}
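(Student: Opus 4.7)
The plan is to bound the contribution of each coordinate $\ell$ separately by $2h(\xi_\ell)$, and then sum. Fix $\ell \in \{1,\ldots,N\}$ and set $d_\ell = \deg(\xi_\ell)$. By Galois theory, the stabilizer of $\bxi$ in $\mathrm{Gal}(\overline{\Q}/\Q)$ is contained in the stabilizer of $\xi_\ell$, and the latter has index $d_\ell$ in $\mathrm{Gal}(\overline{\Q}/\Q)$. By Lemma \ref{Lem10}(ii), $d_\ell$ divides $D$, and the natural projection from the Galois orbit $S$ of $\bxi$ onto the Galois orbit of $\xi_\ell$ is $(D/d_\ell)$-to-one. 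Hence each of the $d_\ell$ conjugates $\alpha_1^{(\ell)}, \ldots, \alpha_{d_\ell}^{(\ell)}$ of $\xi_\ell$ appears with multiplicity $D/d_\ell$ in the multiset $\{\xi_{j,\ell}\}_{j=1}^D$, giving
\begin{equation*}
\frac{1}{D}\sum_{j=1}^D \bigl|\log|\xi_{j,\ell}|\bigr| = \frac{1}{d_\ell}\sum_{k=1}^{d_\ell}\bigl|\log|\alpha_k^{(\ell)}|\bigr|.
\end{equation*}

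Next, I would use the pointwise identity $|\log y| = \log^+ y + \log^+(y^{-1})$, valid for $y>0$, and handle the two pieces separately via the Mahler measure. The defining formula together with the fact that the leading coefficient $c_{d_\ell}^{(\ell)} \in \Z\setminus\{0\}$ satisfies $\log|c_{d_\ell}^{(\ell)}| \geq 0$ gives
\begin{equation*}
\frac{1}{d_\ell}\sum_{k=1}^{d_\ell}\log^+|\alpha_k^{(\ell)}| \leq \frac{m(P_{\xi_\ell})}{d_\ell} = h(\xi_\ell).
\end{equation*}
For the reciprocal piece, I would invoke the well-known identity $h(\xi_\ell^{-1}) = h(\xi_\ell)$, which one verifies by observing that the minimal polynomial of $\xi_\ell^{-1}$ is (up to sign) the reversed polynomial of $P_{\xi_\ell}$, whose leading coefficient is the constant term $c_0^{(\ell)}$, again a nonzero integer; the conjugates of $\xi_\ell^{-1}$ are precisely $\{1/\alpha_k^{(\ell)}\}_{k=1}^{d_\ell}$. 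Applying the same Mahler-measure bound to $\xi_\ell^{-1}$ then yields
\begin{equation*}
\frac{1}{d_\ell}\sum_{k=1}^{d_\ell}\log^+\!\bigl(1/|\alpha_k^{(\ell)}|\bigr) \leq h(\xi_\ell^{-1}) = h(\xi_\ell).
\end{equation*}

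Adding these two estimates gives $\frac{1}{d_\ell}\sum_k \bigl|\log|\alpha_k^{(\ell)}|\bigr| \leq 2h(\xi_\ell)$, and summing over $\ell = 1, \ldots, N$ produces the desired bound $2\sum_\ell h(\xi_\ell) = 2h(\bxi)$. There is no serious technical obstacle in this argument; the mildly delicate point is the combinatorial multiplicity count establishing the initial reduction from the $D$-orbit of $\bxi$ to the $d_\ell$-orbit of $\xi_\ell$, which is a routine application of the orbit-stabilizer theorem already implicit in Lemma \ref{Lem10}.
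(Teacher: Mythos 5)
Your proposal is correct and follows essentially the same route as the paper: the identical orbit--stabilizer multiplicity count reduces the claim to a single coordinate, and the one-variable bound rests on the same Mahler-measure facts. The only cosmetic difference is that you split $|\log y| = \log^+ y + \log^+(1/y)$ and invoke $h(\xi_\ell^{-1}) = h(\xi_\ell)$ via the reversed polynomial, whereas the paper writes $|\log y| = 2\log^+ y - \log y$ and uses the product of the roots (the ratio $|c_D/c_0|$) directly; the two computations are the same in substance.
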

\begin{proof}
We start with the case $N=1$ and simplify the notation by writing $\xi_j := \xi_{j,1}$. Let $P(x) = \sum_{j=0}^D c_jx^j = c_D \prod_{j=1}^D (x - \xi_j)$ be the minimal polynomial of $\xi = \xi_1$ over $\Z[x]$. Then 
\begin{align}\label{20241028_17:09}
\begin{split}
\frac{1}{D} \sum_{j=1}^D  \big|\log |\xi_{j}|\big| & = \frac{1}{D}  \sum_{j=1}^D  \Big( 2\log^+|\xi_{j}| - \log |\xi_{j}|\Big) = \frac{1}{D} \left(  \sum_{j=1}^D 2 \log^+ |\xi_{j}| \right) \!+ \frac{1}{D}\log \left| \frac{c_D}{c_0}\right|\\
& \leq \frac{2}{D} \left( \log|C_D| +  \sum_{j=1}^D  \log^+ |\xi_{j}| \right) = 2h(\xi).
\end{split}
\end{align}
We now consider the case $N \geq 2$. For each $\ell = 1, \ldots, N$ and each $j = 1,\ldots, D$, the numbers $\xi_{1, \ell}$ and $\xi_{j,\ell}$ are conjugates. Denote by $S_{\ell}$ the Galois orbit of $\xi_{1, \ell}$. By Lemma \ref{Lem10} we have $D = k_{\ell}|S_{\ell}|$, for a positive integer $k_{\ell}$ that is exactly the number of times that each element in the orbit $S_{\ell}$ is repeated in $\{\xi_{1, \ell}, \ldots, \xi_{D, \ell}\}$. Hence, using \eqref{20241028_17:09}, we get
\begin{align*}
\frac{1}{D} \sum_{j=1}^D  \sum_{\ell = 1}^N\big|\log |\xi_{j, \ell}|\big| =  \sum_{\ell = 1}^N \frac{1}{k_{\ell}|S_{\ell}|}\sum_{j=1}^D \big|\log |\xi_{j, \ell}|\big| = \sum_{\ell = 1}^N \frac{1}{|S_{\ell}|} \sum_{\alpha \in S_{\ell}} \big|\log|\alpha|\big| \leq \sum_{\ell = 1}^N 2h(\xi_{1, \ell}) = 2h(\bxi).
\end{align*}
\end{proof}

For $\bxi = (\xi_1, \xi_2, \ldots, \xi_N) \in \big(\overline{\mathbb{Q}}^{\times}\big)^N$, with respect to the generalized degree $\mc{D}(\bxi)$, observe from the definition that $\mc{D}(\bxi) \leq \min\{{\rm deg}(\xi_1), \ldots, {\rm deg}(\xi_N)\}$. In particular $\mc{D}(\bxi)$ can be effectively computed by considering only $\n \in \Z^N\setminus\{\0\}$ such that $\|\n\|_1 \leq \min\{{\rm deg}(\xi_1), \ldots, {\rm deg}(\xi_N)\}$.

\smallskip

In dimension $N=1$, if $\{\xi_k\}_{k \geq 1} \subset \overline{\mathbb{Q}}^{\times}$ is a strict sequence with $\lim_{k \to \infty}h(\xi_k) = 0$, one can plainly see that $\lim_{k \to \infty}{\rm deg}(\xi_k) = \infty$. This follows from the Northcott property, that states that there only finitely many algebraic numbers with bounded degree and height. Hence, if ${\rm deg}(\xi_{k_j}) \leq c$ along a subsequence, we would be led to the conclusion that infinitely many of such $\xi_{k_j}$'s are roots of unity, contradicting the assumption that the sequence is strict. The next lemma presents the corresponding claim for the generalized degree. 

\begin{lemma}[cf. Lemma 2.8 of \cite{DNS}] \label{Lem12} Let $\{\bxi_k\}_{k \geq 1} \subset \big(\overline{\mathbb{Q}}^{\times}\big)^N$ be a strict sequence with $\lim_{k \to \infty} h(\bxi_k) = 0$. Then $\lim_{k \to \infty} \mc{D}(\bxi_k) = \infty$.
\end{lemma}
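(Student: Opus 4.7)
The plan is a standard Northcott--Kronecker argument, arguing by contradiction. Suppose that $\mc{D}(\bxi_k)$ does not tend to infinity. Then there is a constant $C > 0$ and a subsequence (which, by abuse of notation, we continue to call $\{\bxi_k\}$) for which $\mc{D}(\bxi_k) \leq C$ for every $k$. By definition of $\mc{D}$, for each $k$ we may pick $\n_k \in \Z^N \setminus \{\0\}$ realizing the minimum, so that simultaneously
\begin{equation*}
\|\n_k\|_1 \leq C \quad \text{and} \quad \deg\bigl(\chi^{\n_k}(\bxi_k)\bigr) \leq C.
\end{equation*}

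Since there are only finitely many nonzero vectors in $\Z^N$ with $\ell^1$-norm at most $C$, the pigeonhole principle lets us extract a further subsequence along which $\n_k = \n$ is constant with $\n \neq \0$. Along this subsequence, the bound $h(\chi^{\n}(\bxi_k)) \leq \|\n\|_\infty \, h(\bxi_k) \leq C\, h(\bxi_k)$ (an estimate used already in \eqref{20241025_16:28}) combined with $h(\bxi_k) \to 0$ yields $h(\chi^{\n}(\bxi_k)) \to 0$. In particular, the sequence $\{\chi^{\n}(\bxi_k)\}$ consists of algebraic numbers of bounded degree ($\leq C$) and bounded height, so by Northcott's theorem it takes only finitely many values. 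Passing once more to a subsequence, we may assume $\chi^{\n}(\bxi_k) = \zeta$ is constant.

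Taking the limit in $k$ shows $h(\zeta) = 0$, and since $\zeta \in \overline{\Q}^{\times}$, Kronecker's theorem forces $\zeta$ to be a root of unity; let $m \geq 1$ be its order. Then $\chi^{m\n}(\bxi_k) = \zeta^m = 1$ for every $k$ in our subsequence, so $\bxi_k \in \ker(\chi^{m\n})$. Since $m\n \neq \0$, the character $\chi^{m\n}\colon \bigl(\overline{\Q}^{\times}\bigr)^N \to \overline{\Q}^{\times}$ is nontrivial, and thus $\ker(\chi^{m\n})$ is a proper algebraic subgroup of $\bigl(\overline{\Q}^{\times}\bigr)^N$ containing infinitely many terms of the original sequence. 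This contradicts the strictness assumption, completing the proof. The only mildly delicate point is keeping track of the successive passes to subsequences (first to bound $\mc{D}$, then to fix $\n$, then to fix $\zeta$); once the three Northcott/Kronecker ingredients are lined up, the conclusion is immediate.
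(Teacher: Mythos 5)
Your argument is correct and is essentially the paper's: the paper first notes that $h(\chi^{\n}(\bxi_k))\leq\|\n\|_\infty h(\bxi_k)\to 0$ and that strictness passes to the sequences $\{\chi^{\n}(\bxi_k)\}$, deduces $\deg(\chi^{\n}(\bxi_k))\to\infty$ for each fixed $\n\neq\0$ from the Northcott property, and then concludes by the same contradiction/pigeonhole step you carry out. You merely spell out the subsequence extractions and the Northcott--Kronecker--kernel argument that the paper leaves implicit (having sketched it in the $N=1$ discussion preceding the lemma), so there is no substantive difference.
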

\begin{proof}
Since  $\{\bxi_k\}_{k \geq 1}$ is strict, the sequence $\{\chi^{\n}(\bxi_k)\}_{k \geq 1}$ is a strict sequence in $\mathbb{Q}^{\times}$ for every $\n \in \Z^N\setminus\{\0\}$. Write $\bxi_k = (\xi_{k,1}, \ldots, \xi_{k,N})$ and let $\n = (n_1, \ldots, n_N) \in \Z^N \setminus\{\0\}$. Then 
\begin{align*}
h\big(\chi^{\n}(\bxi_k)\big) & = h\big(\xi_{k,1}^{n_1}\ldots\xi_{k,N}^{n_N}\big) \leq h\big(\xi_{k,1}^{n_1}\big) + \ldots + h\big(\xi_{k,N}^{n_N}\big) \\
& = |n_1|h(\xi_{k,1}) + \ldots + |n_N|h(\xi_{k,N})  \leq \|\n\|_{\infty} h(\bxi_k) \to 0.
\end{align*}
Hence, $\lim_{k \to \infty} {\rm deg}\big(\chi^{\n}(\bxi_k)\big) = \infty$.
Recall that, for each $k \geq 1$, there exists $\n_k \in \Z^N \setminus\{\0\}$ such that $\mc{D}(\bxi_k) = \|\n_k\|_1 \,{\rm deg}\big(\chi^{\n_k}(\bxi_k)\big)$. Arguing by contradiction, one is plainly led to the fact that $\lim_{k \to \infty} \mc{D}(\bxi_k) = \infty$.
\end{proof}

\section{Proof of Bilu's equidistribution theorem} \label{App_C}
It is possible to prove Theorem \ref{BiluThm} via abstract functional analysis machinery starting from a dense subclass of functions like in Corollary \ref{Cor3}; see \cite[\S 3.2]{P} for details. Here we give a direct proof. 

\smallskip

Let $\{\bxi_k\}_{k \geq 1} \subset \big(\overline{\mathbb{Q}}^{\times}\big)^N$ be a strict sequence with $\lim_{k \to \infty} h(\bxi_k) = 0$. We have already noted in Lemma \ref{Lem12} that $\lim_{k \to \infty} \mc{D}(\bxi_k) = \infty$, and therefore $\lim_{k \to \infty} h_{\mc{D}}(\bxi_k) = 0$. Let $S_k$ be the Galois orbit of $\bxi_k$. Let $F: (\C^{\times})^N \to \C$ be a bounded and continuous function, and let $\varepsilon >0$ be given. 

\smallskip

Choose $G: (\C^{\times})^N \to \C$ smooth and of compact support such that $|F(\z) - G(\z)| \leq \frac{\varepsilon}{2}$ in $\Gamma_{1} = \big\{ \z = (z_1, \ldots, z_N) \in (\C^{\times})^N \ : \ \sum_{j=1}^N |\log|z_j|| \leq 1 \big\}$. Since $\mu_{S_k}$ and $\mu_{(\mathbb{S}^1)^N}$ are probability measures,
\begin{align}\label{20251008_13:33}
\left|\int_{\Gamma_{1}} (F- G) \,\d \mu_{S_k} - \int_{\Gamma_{1} } (F-G) \,\d \mu_{(\mathbb{S}^1)^N} \right| \leq \left|\int_{\Gamma_{1}} (F- G) \,\d \mu_{S_k}\right| + \left| \int_{\Gamma_{1} } (F-G) \,\d \mu_{(\mathbb{S}^1)^N} \right| \leq \varepsilon\,,
\end{align}
and using $\eqref{20241025_12:40}$ with $\delta =1$ we get 
\begin{align}\label{20251008_13:34}
\begin{split}
& \left|\int_{(\C^{\times})^N \setminus \Gamma_{1}} (F- G) \,\d \mu_{S_k} - \int_{(\C^{\times})^N \setminus \Gamma_{1} } (F-G) \,\d \mu_{(\mathbb{S}^1)^N} \right|  = \left|\int_{(\C^{\times})^N \setminus \Gamma_{1}} (F- G) \,\d \mu_{S_k} \right| \\
& \qquad \qquad \qquad  \leq   \frac{\big(\|F\|_{\infty} + \|G\|_{\infty}\big)}{|S_k|}\sum_{\balpha \in S_k \setminus \Gamma_{1}} \!\! 1 \ \leq \ 2 \,\big(\|F\|_{\infty} + \|G\|_{\infty}\big) \,h(\bxi_k).
\end{split}
\end{align}
From \eqref{20251008_13:33} and \eqref{20251008_13:34} we plainly get, via the triangle inequality, 
\begin{align}\label{20251008_13:43}
\E(F - G, \bxi_k) \leq  \varepsilon + 2 \,\big(\|F\|_{\infty} + \|G\|_{\infty}\big) \,h(\bxi_k).
\end{align}
From \eqref{20251008_13:43} and our effective estimates for the smooth test function $G$ we conclude that 
\begin{align*}
\limsup_{k \to \infty} \, \E(F, \bxi_k) \leq \limsup_{k \to \infty} \, \E(F- G, \bxi_k)  + \limsup_{k \to \infty} \,\E(G, \bxi_k) \leq  \varepsilon.
\end{align*}
As $\varepsilon >0$ was arbitrary, the theorem follows.

\section*{Acknowledgements}
The second author would like to thank the DST - Government of India for the support under the DST-INSPIRE Faculty Scheme with Faculty Reg. No. IFA21-MA 168. The authors are thankful to Umberto Zannier and to the anonymous referee for the helpful comments.

\end{document}